\newtheorem{theorem}{Theorem}[section]
\newtheorem{lemma}{Lemma}[section]
\newtheorem{corollary}{Corollary}[section]
\newtheorem{definition}{Definition}[section]
\def\E{{\mathcal{E}}}
\def\T{{\mathcal{T}}}
\def\dQ{{\mathbb{Q}}}
\def\b0{\boldsymbol{0}}
\def\sumT{\sum_{T\in\mathcal{T}_h}}     %new
\def\bn{{\mathbf{n}}}
\def\bf{{\mathbf{f}}}
\def\bq{{\mathbf{q}}}
\newtheorem{algorithm1}{Weak Galerkin Algorithm}
 \newcommand{\eps}{\varepsilon}
 \newcommand{\Real}{\mathbb{R}}
 \newcommand{\trb}[1]{|\!|\!|#1|\!|\!|}
\begin{document}
\title{Acceleration of weak Galerkin methods for the Laplacian eigenvalue problem}
\date{}
\author{
Qilong Zhai\thanks{Department of Mathematics, Jilin University, Changchun, 130012,
China (diql15@mails.jlu.edu.cn)},\ \ 
Hehu Xie\thanks{LSEC and Institute of Computational Mathematics and Scientific/Engineering Computing,
Academy of Mathematics and Systems Science, Chinese Academy of
Sciences, Beijing 100190, China (hhxie@lsec.cc.ac.cn). The research
of this author was supported in part by the National Natural Science
Foundation of China (NSFC) under grants 91330202, 11001259,
11371026, 11201501, 11031006, 2011CB309703, Science Challenge Project (No. JCKY2016212A502),
and the National Center
for Mathematics and Interdisciplinary Science, CAS},\ \ 
Ran Zhang\thanks{Department of Mathematics, Jilin University, Changchun, 130012, China
(zhangran@mail.jlu.edu.cn). The research of this author was supported
in part by the   National Natural Science Foundation of China (NSFC 1271157, 11371171, 11471141),
and by the New Century Excellent Researcher Award Program from Ministry of Education of
China}\ \ \  and \ \ 
Zhimin Zhang\thanks{Beijing Computational Science Research Center, Beijing,
100193, China (zmzhang@csrc.ac.cn); Department of Mathematics, Wayne State University,
Detroit, MI 48202 (zzhang@math.wayne.edu).
The research of this author was supported in part by the National
Natural Science Foundation of China (NSFC 11471031, 91430216)
and the U.S. National Science Foundation (DMS--1419040)}}
\maketitle
%--------------------------------------------------------------------------------------------
\begin{abstract}
Recently, we proposed a weak Galerkin finite element method for the Laplace
eigenvalue problem. In this paper, we present two-grid and
two-space skills to accelerate the weak Galerkin method.
By choosing parameters properly, the two-grid and two-space weak Galerkin method
not only doubles the convergence rate, but also maintains the asymptotic lower
bounds property of the weak Galerkin method. Some numerical examples
are provided to validate our theoretical analysis.
\end{abstract}

{\textbf{Keywords.}}\ weak Galerkin finite element method, eigenvalue problem, two-grid method,
two-space method, lower bound.

{\textbf{AMS Subject Classification:}} Primary, 65N30, 65N15, 65N12, 74N20; Secondary, 35B45, 35J50, 35J35

\section{Introduction}

The eigenvalue problem arises from many branches of mathematics and
physics, including quantum mechanics, fluid mechanics, stochastic process
and structural mechanics. A variety of applications of eigenvalue problem, especially
the Laplacian eigenvalue problem, are surveyed by a recent SIAM review paper \cite{Grebenkov2013}.
Many numerical methods have been developed for the
Laplacian eigenvalue problem, such as finite difference methods
\cite{Weinberger1958} and finite element methods
\cite{I.Babuska,Boffi2010,Chatelin2011}.

The finite element method is one of the efficient approaches for the
Laplacian eigenvalue problem for its simplicity and adaptivity on
triangular meshes. Due to the minimum-maximum principle, the
conforming finite element method always gives the upper bounds for the Laplacian
eigenvalues. In order to get accurate intervals for eigenvalues, it
is necessary to have lower bounds of  eigenvalues. There are
mainly two ways, the post-processing method
\cite{Carstensen,Larson2000,Liu2015,Liu2013a,Liu2015,Tomas2,Tomas1} and the nonconforming finite element  method
\cite{AD04,Hu2014b,Luo2012,E2015}. Some specific nonconforming finite element methods provide asymptotic lower bounds
for eigenvalues without solving an auxiliary problem, while
it seems difficult to construct a high order nonconforming element.

{\color{black}
Recently, a new method for solving the partial differential equations,
named the weak Galerkin (WG) method, has been developed. The WG
method was first introduced in \cite{Wang2014a} for the
second order elliptic equation, and was soon applied to many types of partial
differential equations, such as the parabolic equation \cite{Li2013},
the biharmonic equation
\cite{Oden2007,Mu2013,Zhang2015},
the Brinkman equation \cite{Mu}, and the Maxwell equation \cite{Mu2013d}.
In \cite{Xie2015}, the Laplacian eigenvalue problem was
investigated by the WG method.} An astonishing feature is:
it offers asymptotic lower bounds for the
Laplacian eigenvalues on polygonal meshes by employing high order polynomial elements.
Comparing with the boundary value problems, the eigenvalue problem
is more difficult to solve since it is actually a special nonlinear
equation. Solving eigenvalue problems need more computational work
and memory than solving corresponding boundary value problems. So how to
accelerate the solving speed is a necessary and important
topic in computational mathematics.

Two-grid and two-space methods are both efficient numerical methods for nonlinear
problems. The main idea is to approximate a
large nonlinear system by solving a small nonlinear system and a
large linear system, and thereby to reduce the computational cost. The
two-grid method was first introduced in \cite{Xu1994} to solve a semilinear second order elliptic
problem. Soon it was adopted for different kinds of PDEs \cite{Marion1995,Xu1996,Xu2000}. The eigenvalue problem can also be
viewed as a nonlinear problem, the corresponding two-grid method
was studied in \cite{Xu2001}, {\color{black}and some variations have been developed later,
such as the shifted-inverse power method \cite{Hu2015,Yang2011}, some applications have also been developed
for Stokes
\cite{Chen2009,Xie2015a,Feng2014}
 and Maxwell eigenvalue problems \cite{MartinJ.WilcoxL.C.BursteddeC.Ghattas2012},
the second order elliptic eigenvalue problems by the mixed finite element methods \cite{Chen2011},
 Bose-Einstein problems \cite{Osher2009}.
The two-space method is proposed %in \cite{LinXie_1981}
for the biharmoinc eigenvalue problem
by the nonconforming finite element methods. Then it is adopted for the Laplacian eigenvalue problems
by the conforming finite element methods \cite{Racheva2002} and Stokes eigenvalue problems \cite{Chen2009}.}

In this paper, we apply the two-grid \cite{Xu2001} and two-space %\cite{LinXie_1981}
methods to
accelerate the WG method for the Laplacian eigenvalue
problems.  In this way, the computing complexity of the WG method can be reduced
greatly. Another important nice feature is: by choosing the mesh sizes
properly, the two-grid WG method can still provide lower
bounds for the Laplacian eigenvalues. Rigorous theoretical analysis will be given for
the proposed method, and numerical examples will be provided as well.

An outline of the paper goes as follows. In Section 2, we introduce
the WG method for the eigenvalue problem and the
corresponding basic error estimates.
In Section 3, we give the $H^{-1}$ error estimate for the WG method, which
plays an important role in the analysis.
The two-grid method will be
introduced and analyzed in Section 4. Section 5 is devoted to the
two-space method.
 In Section 6, some numerical examples are presented to validate our theoretical analysis.
 Some concluding remarks are given in the final section.

%======================================================================================
\section{A standard discretization of weak Galerkin scheme}
In this section, we state some notation in this paper,
introduce the standard WG scheme for Laplacian eigenvalue problem briefly
and present some results from \cite{Xie2015}.
Throughout this  paper, we always use $C$ to represent a constant independent
of mesh sizes $H$ and $h$, which may have different values according to the occurrence.
The symbol $a\lesssim b$ stands for $a\le C b$ for some constant $C$.

In this paper, for simplicity, we consider the following Laplacian eigenvalue problem:
Find $(\lambda, u)$ such that
\begin{equation}\label{problem-eq}
\left\{
\begin{array}{rcl}
-\Delta u &=& \lambda u,\quad \text{in }\Omega,\\
u &=& 0,\quad~~ \text{on }\partial\Omega,\\
\int_\Omega u^2 &=& 1,
\end{array}
\right.
\end{equation}
where $\Omega$ is a polygon region in $\Real^d$ $(d=2,3)$.

The standard Sobolev space notation are also used in this paper.
 Let $D$ be any open bounded
domain with Lipschitz continuous boundary in $\mathbb{R}^d$ $(d=2, 3)$.
We use the standard definition for the Sobolev space $H^s(D)$ and
their associated inner products $(\cdot, \cdot)_{s, D}$, norms
$\|\cdot\|_{s, D}$, and seminorms $|\cdot|_{s, D}$ for any $s\ge 0$.
For example, for any integer $s\ge 0$, the seminorm $|\cdot|_{s, D}$
is given by
$$
|v|_{s, D}=\left(\sum_{|\alpha|=s}\int_D |\partial^\alpha v|^2 {\rm
d} D\right)^{\frac{1}{2}}
$$
with the usual multi-index notation
$$
\alpha=(\alpha_1, \cdots, \alpha_d), \quad
|\alpha|=\alpha_1+\cdots+\alpha_d, \quad
\partial^\alpha=\prod_{j=1}^d \partial_{x_j}^{\alpha_j}.
$$
The Sobolev norm $\|\cdot\|_{m,D}$ is given by
$$
\|v\|_{m, D}=\left(\sum_{j=0}^{m} |v|_{j, D}^2\right)^{\frac{1}{2}}.
$$

The space $H^0(D)$ coincides with $L^2(D)$, for which the norm and
the inner product are denoted by $\|\cdot\|_D$ and
$(\cdot,\cdot)_D$, respectively. When $D=\Omega$, we shall drop the
subscript $D$ in the norm and in the inner product notation.

%\section{A standard discretization of WG scheme}
%In this section, we shall introduce the standard WG scheme for
%the elliptic eigenvalue problem briefly, and give some results
%studied in \cite{Xie2015}.

Let $\mathcal T_h$ be a partition of the domain $\Omega$, and the elements
in $\T_h$ are polygons satisfying the regular assumptions specified
in \cite{Wang2014a}. Denote by $\E_h$ the edges in $\T_h$, and by
$\E_h^0$ the interior edges $\E_h\backslash \partial\Omega$. For
each element $T\in\T_h$, $h_T$ represents the diameter of $T$, and
$h=\max_{T\in\T_h} h_T$ denotes the mesh size.

Now we introduce a WG scheme for the eigenvalue problem
(\ref{problem-eq}).
 For a given integer $k\ge 1$, define the WG finite
element space
\begin{eqnarray*}
V_h=\{v=(v_0,v_b):v_0|_T\in P_k(T), v_b|_e\in P_{k-1}(e), \forall
T\in\T_h, e\in\E_h, \text{ and }v_b=0 \text{ on }\partial\Omega\}.
\end{eqnarray*}
For each weak function $v\in V_h$, we can define its weak gradient
$\nabla_w v$ by distribution element-wisely as follows.
\begin{definition}\cite{Wang2013a}
For each $v\in V_h$, $\nabla_w v|_T$ is the unique polynomial in
$[P_{k-1}(T)]^d$ satisfying
\begin{eqnarray}\label{def-wgradient}
(\nabla_w v,\bq)_T=-(v_0,\nabla\cdot\bq)_T+\langle v_b,\bq\cdot\bn
\rangle_{\partial T},\quad\forall\bq\in [P_{k-1}(T)]^d,
\end{eqnarray}
where $\bn$ denotes the outward unit normal vector.
\end{definition}

For the aim of analysis, some projection operators are also employed
in this paper. Let $Q_0$ denote the $L^2$ projection from $L^2(T)$
onto $P_k(T)$, $Q_b$ denote the $L^2$ projection from $L^2(e)$ onto
$P_{k-1}(e)$, and $\dQ_h$ denote the $L^2$ projection from
$[L^2(T)]^d$ onto $[P_{k-1}(T)]^d$. Combining $Q_0$ and $Q_b$
together, we can define $Q_h=\{Q_0,Q_b\}$, which is a projection
from $H_0^1(\Omega)$ onto $V_h$.

Now we define three bilinear forms on $V_h$ for any $v,w\in V_h$,
\begin{eqnarray*}
s(v,w)&=&\sumT h_T^{-1+\eps}\langle Q_b v_0-v_b,Q_b
w_0-w_b\rangle_{\partial T},\\
a_s(v,w)&=&(\nabla_w v,\nabla_w w)+s(v,w),\\
b_w(v,w)&=&(v_0,w_0),
\end{eqnarray*}
where $0\leq \eps<1$ is a constant \cite{Xie2015}.
Define the following norm on $V_h$ that
\begin{eqnarray*}
\trb v^2=a_s(v,v),\quad\forall v\in V_h.
\end{eqnarray*}
For the simplicity of notation, we introduce a semi-norm
$\|\cdot\|_{b}$ by $$\|v\|^2_b:=b_w(v, v),\quad\forall v\in V_h.$$
With these preparations we can give the following WG algorithm.
\begin{algorithm1}\cite{Xie2015}
Find $(\lambda_h, u_h)\in\Real\times V_h$ such that $\|u_h\|_b=1$ and
\begin{eqnarray}\label{WG-scheme}
a_s(u_h,v_h)=\lambda_h b_w(u_h,v_h),\quad\forall v_h\in V_h.
\end{eqnarray}
\end{algorithm1}
Denote $V_0=H_0^1(\Omega)$, and define the sum space $V=V_0+V_h$.
Now we introduce the following semi-norm on $V$ that
\begin{eqnarray*}
\|w\|_V^2= \sumT \Big(\|\nabla w_0\|_T^2+ h_T^{-1}\|Q_b
w_0-w_b\|^2_{\partial T}\Big).
\end{eqnarray*}
$\|\cdot\|_V$ indeed defines a norm on $V$ \cite{Xie2015}.
{\color{black} For the analysis in this paper, we still need
to introduce the dual norm of $\|\cdot\|_{V}$ as follows
\begin{eqnarray*}
\|v_h\|_{-V}=\sup_{w\in V, \|w\|_V\ne 0}\frac{b_w(v_h,w)}{\|w\|_V}.
\end{eqnarray*}}
For the standard WG scheme, the following convergence theorem
holds true, and which also gives a lower bound estimate.
\begin{theorem}\cite{Xie2015}\label{Error_Theorem_Eigenpair}
{\color{black} Suppose $\lambda_{j,h}$ is the $j$-th eigenvalue of
(\ref{WG-scheme}) and $u_{j,h}$ is the corresponding eigenfunction.
%Suppose $\lambda_j$ is the $j$-th exact eigenvalue, then
There exists an exact eigenfunction $u_j$ corresponding to the $j$-th exact eigenvalue $\lambda_j$
such that the following error estimates hold}
\begin{eqnarray}\label{eig-est1}
&&Ch^{2k}\|u_j\|_{k+1}\le\lambda_j-\lambda_{j,h}\le
Ch^{2k-2\eps}\|u_j\|_{k+1},
\\\label{eig-est2}
&&\|u_j-u_{j,h}\|_V \le Ch^{k-\eps}\|u_j\|_{k+1},
\\\label{eig-est3}
&&\|u_j-u_{j,h}\|_b\le Ch^{k+1-\eps}\|u_j\|_{k+1},
\end{eqnarray}
when $u_j\in H^{k+1}(\Omega)$ and $h$ is small enough.
\end{theorem}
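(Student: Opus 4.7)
My plan is to adopt the Babu\v{s}ka--Osborn spectral approximation framework, adapted to the WG setting, and to derive the lower-bound property from a Rayleigh-quotient identity that exposes the sign of the dominant error term.

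First, I would establish the source-problem error estimates for the associated WG Poisson solver. Let $T:L^2(\Omega)\to V_0$ and $T_h:L^2(\Omega)\to V_h$ be the solution operators for $-\Delta u=f$ and its WG discretization respectively. Using coercivity of $a_s(\cdot,\cdot)$ on $V_h$ with respect to $\trb{\cdot}$, the approximation property of $Q_h$, the commutativity $\nabla_w Q_h v=\dQ_h\nabla v$, and the standard consistency error $\big|a_s(Q_h u,v_h)-b_w(f,v_h)\big|\lesssim h^{k-\eps}\|u\|_{k+1}\|v_h\|_V$ (the $h^{-\eps}$ coming from the weight $h_T^{-1+\eps}$ in the stabilizer $s$), I would obtain
\begin{equation*}
\|Tf-T_hf\|_V \lesssim h^{k-\eps}\|Tf\|_{k+1}.
\end{equation*}
An Aubin--Nitsche duality argument, leveraging $H^2$-regularity of the adjoint problem on the convex/smooth part of $\Omega$, then upgrades this to the $L^2$-type estimate $\|Tf-T_hf\|_b\lesssim h^{k+1-\eps}\|Tf\|_{k+1}$.

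Second, the collective compactness of $\{T_h\}$ and norm convergence $\|T-T_h\|\to 0$ put us inside the abstract Babu\v{s}ka--Osborn theory, which immediately delivers the eigenfunction estimates \eqref{eig-est2} and \eqref{eig-est3} as well as the two-sided bound $|\lambda_j-\lambda_{j,h}|\lesssim h^{2k-2\eps}\|u_j\|_{k+1}^2$, yielding the upper inequality in \eqref{eig-est1}.

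The main obstacle, and the interesting part, is the \emph{lower bound} $\lambda_j-\lambda_{j,h}\ge Ch^{2k}$, i.e.\ the assertion that the WG eigenvalue lies below the exact one asymptotically. My plan here is to use the fundamental identity obtained by expanding $a_s-\lambda_j b_w$ on $u_{j,h}-Q_h u_j$ and invoking \eqref{WG-scheme} with $\|u_{j,h}\|_b=1$: after reorganization this yields
\begin{equation*}
\lambda_{j,h}-\lambda_j \;=\; \bigl(a_s(Q_hu_j,Q_hu_j)-\lambda_j b_w(Q_hu_j,Q_hu_j)\bigr) + (\text{h.o.t.}),
\end{equation*}
where the higher order terms are controlled by $\|u_{j,h}-Q_hu_j\|_V^2$ and $\|u_{j,h}-Q_hu_j\|_b$ and are proven smaller via step~2. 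Using $\nabla_w Q_hu_j=\dQ_h\nabla u_j$ and Pythagoras for the $L^2$-projection,
\begin{equation*}
a_s(Q_hu_j,Q_hu_j)-\lambda_j b_w(Q_hu_j,Q_hu_j) \;=\; -\|\nabla u_j-\dQ_h\nabla u_j\|^2+\lambda_j\|u_j-Q_0u_j\|^2+s(Q_hu_j,Q_hu_j).
\end{equation*}
The first term is negative of order $h^{2k}$, while the second is $O(h^{2k+2})$ and the stabilizer term is $O(h^{2k+\eps})$; as long as $\eps>0$ (or under the mild saturation-type assumption $\|\nabla u_j-\dQ_h\nabla u_j\|\gtrsim h^{k}\|u_j\|_{k+1}$ when $\eps=0$), the negative term dominates, which gives the asymptotic lower bound $\lambda_j-\lambda_{j,h}\ge Ch^{2k}\|u_j\|_{k+1}$ claimed in \eqref{eig-est1}. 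The delicate point in the calculation is to quantify ``h.o.t.''\ sharply enough to leave the negative leading term unspoiled, which forces the use of the improved $L^2$ estimate \eqref{eig-est3} rather than just the energy-norm bound.
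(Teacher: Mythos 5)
This theorem is imported from \cite{Xie2015}; the present paper states it without proof, so the only meaningful comparison is with the argument in that reference, and your reconstruction follows essentially the same route: source-problem estimates for the WG Poisson solver feeding the Babu\v{s}ka--Osborn framework to get \eqref{eig-est2}--\eqref{eig-est3} and the upper bound in \eqref{eig-est1}, and the lower bound extracted from the commutativity $\nabla_w Q_h u_j=\dQ_h\nabla u_j$ plus Pythagoras, which exposes the negative term $-\|\nabla u_j-\dQ_h\nabla u_j\|^2=O(h^{2k})$. Two points deserve sharpening. First, the remainder in your Rayleigh-quotient identity is $\trb{u_{j,h}-Q_hu_j}^2-\lambda_{j,h}\|u_{j,h}-Q_hu_j\|_b^2$, and its leading part $\trb{u_{j,h}-Q_hu_j}^2=O(h^{2k-\eps})$ is \emph{not} higher order than $h^{2k}$; it is the nonnegativity of that term, not its smallness, that leaves the lower bound intact, and it is exactly what degrades the upper bound to the weaker exponent $2k-2\eps$, so the phrase ``proven smaller via step 2'' should be replaced by a sign argument. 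Second, your estimate $s(Q_hu_j,Q_hu_j)=O(h^{2k+\eps})$ is genuinely higher order only when $\eps>0$, whereas the theorem allows $\eps=0$; in that case the stabilizer contributes at the same order $h^{2k}$ as the leading negative term, and one must either compare constants explicitly or invoke the saturation-type hypothesis you mention for the combined quantity, not just for $\|\nabla u_j-\dQ_h\nabla u_j\|$.
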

%-------------------------------------------------------------------------------
%\begin{remark}
%The numerical results \cite{Xie2015} show that the numerical eigenvalue
%$\lambda_{j,h}$ is still a lower bound even if $\eps = 0$.
%\end{remark}

%{\color{black}
%Furthermore, the eigenfunction approximation should has the following convergence
%result in $H^{-1}$-norm
%\begin{eqnarray}\label{eig-est4}
%\|u_j-u_{j,h}\|_{-1,w} \le Ch^{k+\min\{k,2\}-\varepsilon}\|u_j\|_{k+1}.
%\end{eqnarray}}
%-----------------------------------------------------------------------------
\section{{\color{black}Error estimate in negative norm}}
In this section, we shall analysis the $\|\cdot\|_{-V}$ error estimate for the
WG scheme (\ref{WG-scheme}). First, we need to establish the $\|\cdot\|_{-V}$ error
estimate for the corresponding boundary value problem.
Consider the Poisson equation
\begin{equation}\label{problem-poisson}
\left\{
\begin{array}{rcl}
-\Delta u &=& f,\quad \text{in }\Omega,\\
u &=& 0,\quad \text{on }\partial\Omega,
\end{array}
\right.
\end{equation}
where $\Omega$ is a polygon or polyhedra in $\Real^d$ $(d=2,3)$.

The WG method is adopted to solve equation (\ref{problem-poisson}).
For analysis, we define the following norm
\begin{eqnarray*}
\trb{v}_{-1}=\sup_{w\in V_h,w\ne 0}\frac{b_w(v,w)}{\trb w}.
\end{eqnarray*}
It is easy to check that $\|\cdot\|_V$ is equivalent to $\|\cdot\|_1$ on the space
$H_0^1(\Omega)$. The relationship between $\|\cdot\|_V$ and $\trb\cdot$
has been discussed in \cite{Xie2015}, which is presented as
follows.
\begin{lemma}\cite{Xie2015}\label{norm-equi}
There exist two constants $C_1$ and $C_2$ such that the following inequalities hold
for any $w\in V_h$
\begin{eqnarray}\label{norm-equi2}
C_1\trb w\le \|w\|_V \le C_2h^{-\frac\eps 2}\trb w.
\end{eqnarray}
\end{lemma}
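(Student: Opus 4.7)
The plan is to establish the two inequalities separately by a term-by-term comparison driven by the defining identity (\ref{def-wgradient}) for $\nabla_w v$ combined with integration by parts on each element. After integrating by parts in (\ref{def-wgradient}) and inserting $Q_b$ on the boundary (which is legitimate because $\bq\cdot\bn|_e\in P_{k-1}(e)$), I get, for every $T\in\T_h$ and $\bq\in[P_{k-1}(T)]^d$, the workhorse identity
\[
(\nabla_w w,\bq)_T \;=\; (\nabla w_0,\bq)_T \;-\; \langle Q_b w_0 - w_b,\,\bq\cdot\bn\rangle_{\partial T}.
\]
Combined with the polynomial trace inequality $\|\bq\cdot\bn\|_{\partial T}\lesssim h_T^{-1/2}\|\bq\|_T$, this identity drives both directions of the estimate.

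\textbf{First inequality} ($C_1\trb{w}\le\|w\|_V$). Choosing $\bq=\nabla_w w$ in the identity above and applying the trace inequality yields the local bound $\|\nabla_w w\|_T\lesssim \|\nabla w_0\|_T + h_T^{-1/2}\|Q_b w_0 - w_b\|_{\partial T}$, so that $\|\nabla_w w\|^2\lesssim \|w\|_V^2$ after squaring and summing over $T$. For the stabilizer, since $0\le\eps<1$ and the mesh size is bounded, $h_T^{-1+\eps}\le h_T^{-1}$, hence $s(w,w)\le\|w\|_V^2$. Adding the two contributions gives the first inequality with a constant independent of $h$.

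\textbf{Second inequality} ($\|w\|_V\le C_2 h^{-\eps/2}\trb{w}$). For this direction I exploit the fact that $\nabla w_0|_T\in[P_{k-1}(T)]^d$ is itself an admissible choice of $\bq$ in the workhorse identity. Taking $\bq=\nabla w_0$ and applying the trace inequality produces the reverse local bound $\|\nabla w_0\|_T^2\lesssim\|\nabla_w w\|_T^2 + h_T^{-1}\|Q_b w_0 - w_b\|_{\partial T}^2$. Both the gradient and boundary contributions to $\|w\|_V^2$ then carry the weight $h_T^{-1}$, whereas the stabilizer only supplies $h_T^{-1+\eps}$. Writing $h_T^{-1}=h_T^{-\eps}\cdot h_T^{-1+\eps}\le h^{-\eps}\,h_T^{-1+\eps}$ absorbs this weight mismatch into a global factor $h^{-\eps}$, yielding $\|w\|_V^2\lesssim h^{-\eps}\trb{w}^2$, which is the claimed estimate after taking square roots.

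\textbf{Main obstacle.} The only subtle point is the unavoidable $h^{-\eps/2}$ blow-up in the second inequality: the stabilization weight $h_T^{-1+\eps}$ is strictly weaker than the $h_T^{-1}$ weight built into $\|\cdot\|_V$, and this gap forces the power of $h$. Every other step is a routine polynomial trace or inverse argument, so the conceptual content is simply recognizing where the exponent $\eps$ has to appear and verifying that the asymmetry enters only through the boundary penalty.
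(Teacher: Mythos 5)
The paper does not prove this lemma itself (it is quoted from \cite{Xie2015}), so there is no in-text argument to compare against; your route via the identity $(\nabla_w w,\bq)_T=(\nabla w_0,\bq)_T-\langle Q_b w_0-w_b,\bq\cdot\bn\rangle_{\partial T}$ with the test choices $\bq=\nabla_w w$ and $\bq=\nabla w_0$ is the standard (and essentially the only) way to prove it, and your first inequality is correct as written.

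There is, however, a genuine error at the one step that actually produces the factor $h^{-\eps/2}$ in the second inequality. You write $h_T^{-\eps}\le h^{-\eps}$, but the paper defines $h=\max_{T\in\T_h}h_T$, so $h_T\le h$ and therefore $h_T^{-\eps}\ge h^{-\eps}$: the inequality you need points the wrong way. As stated, your chain only shows $\|w\|_V^2\lesssim \|\nabla_w w\|^2+\big(\min_T h_T\big)^{-\eps}s(w,w)$, which is weaker than the claim. To recover the bound with the global $h$ you must invoke quasi-uniformity of the mesh, i.e.\ $h\le \sigma\, h_T$ for all $T$, which gives $h_T^{-\eps}\le\sigma^{\eps}h^{-\eps}$ and then the asserted $\|w\|_V\le C_2h^{-\eps/2}\trb{w}$. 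This is not merely cosmetic: without some such hypothesis the term-by-term comparison $h_T^{-1}\le Ch^{-\eps}h_T^{-1+\eps}$ is equivalent to $h\le C^{1/\eps}h_T$, so the lemma in the form stated (with the global mesh size $h$) genuinely requires quasi-uniformity, and your proof should state that assumption explicitly rather than assert the false pointwise inequality. Everything else — the integration by parts, the insertion of $Q_b$, the polynomial trace inequality, and the treatment of the stabilizer in the first direction — is sound.
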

The WG method for the boundary value problem (\ref{problem-poisson}) can be described as follows:
%With these preparations we can give the following weak Galerkin algorithm.
\begin{algorithm1}
Find $u_h\in V_h$ such that
\begin{eqnarray}\label{WG-scheme1}
a_s(u_h,v)=b_w(f,v),\quad\forall v\in V_h.
\end{eqnarray}
\end{algorithm1}
Suppose $u$ is the exact solution for (\ref{problem-poisson}) and
$u_h$ is the corresponding numerical solution of (\ref{WG-scheme1}). Denote by $e_h$
the error that
\begin{eqnarray*}
e_h=Q_h u-u_h=\{Q_0 u-u_0,Q_b u-u_b\}.
\end{eqnarray*}
Then $e_h$ satisfies the following equation.
\begin{lemma}\cite{Xie2015}\label{err-eq}
Let $e_h$ be the error of the weak Galerkin scheme (\ref{WG-scheme1}).
Then we have
\begin{eqnarray}\label{t1}
a_s(e_h,v)=\ell(u,v),\ \ \ \ \forall v\in V_h,
\end{eqnarray}
where
\begin{eqnarray*}
\ell(u,v)=\sumT\langle(\nabla u-\dQ_h\nabla u)\cdot\bn, v_0-v_b\rangle_{\partial T}+s(Q_h u,v).
\end{eqnarray*}
Moreover, we have
\begin{eqnarray}
a_s(Q_hu,v)=\ell(u,v)+b_w(f,v), \ \ \ \ \forall v\in V_h.
\end{eqnarray}
\end{lemma}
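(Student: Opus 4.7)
The plan is to derive the second identity $a_s(Q_h u,v)=\ell(u,v)+b_w(f,v)$ first, since the first identity follows by subtracting the discrete equation $a_s(u_h,v)=b_w(f,v)$ from it. The main ingredient is a commuting-style property: $\nabla_w (Q_h u) = \dQ_h \nabla u$ on every element. To establish this, I take an arbitrary test polynomial $\bq\in [P_{k-1}(T)]^d$, apply the definition \eqref{def-wgradient}, and use that $Q_0$ and $Q_b$ are $L^2$-projections against polynomials of degree $k$ and $k-1$ respectively. Since $\nabla\cdot\bq\in P_{k-1}(T)\subset P_k(T)$ and $\bq\cdot\bn\in P_{k-1}(e)$, we can peel off the projections and then integrate by parts on $T$ to obtain $(\nabla_w Q_h u,\bq)_T=(\nabla u,\bq)_T=(\dQ_h\nabla u,\bq)_T$.

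Next I expand $a_s(Q_h u, v) = (\nabla_w Q_h u,\nabla_w v) + s(Q_h u,v) = (\dQ_h\nabla u,\nabla_w v) + s(Q_h u,v)$, and compute the first term element by element. Using \eqref{def-wgradient} for $v$, then integration by parts on $T$, I rewrite
\begin{eqnarray*}
(\dQ_h\nabla u,\nabla_w v)_T = (\nabla v_0,\nabla u)_T - \langle v_0-v_b,\dQ_h\nabla u\cdot\bn\rangle_{\partial T},
\end{eqnarray*}
where I also replaced $\dQ_h\nabla u$ by $\nabla u$ inside $(\nabla v_0,\cdot)_T$ because $\nabla v_0\in [P_{k-1}(T)]^d$. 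One more integration by parts on $(\nabla v_0,\nabla u)_T$ together with $-\Delta u=f$ gives $(v_0,f)_T+\langle v_0,\nabla u\cdot\bn\rangle_{\partial T}$.

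Summing over $T\in\T_h$, I collect the boundary terms as
\begin{eqnarray*}
\sumT\langle v_0,\nabla u\cdot\bn\rangle_{\partial T}-\sumT\langle v_0-v_b,\dQ_h\nabla u\cdot\bn\rangle_{\partial T}.
\end{eqnarray*}
The key consistency step is to recognize that $\nabla u\cdot\bn$ is single-valued across interior edges and that $v_b$ is single-valued on each edge with $v_b=0$ on $\partial\Omega$; hence I can replace $\langle v_0,\nabla u\cdot\bn\rangle_{\partial T}$ by $\langle v_0-v_b,\nabla u\cdot\bn\rangle_{\partial T}$ without changing the sum. Combining the two boundary sums yields exactly $\sumT\langle(\nabla u-\dQ_h\nabla u)\cdot\bn,v_0-v_b\rangle_{\partial T}$, which together with $s(Q_h u,v)$ is the definition of $\ell(u,v)$. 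This gives the second identity, and subtracting \eqref{WG-scheme1} gives the first.

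I expect the main obstacle to be the bookkeeping in the edge sums, particularly justifying the insertion of $v_b$ in $\langle v_0,\nabla u\cdot\bn\rangle_{\partial T}$. This requires enough regularity of $u$ (so that the normal flux is well-defined and continuous across interior edges), and careful handling of boundary edges using $v_b|_{\partial\Omega}=0$; the rest of the argument is a mechanical application of the weak-gradient definition and projection properties.
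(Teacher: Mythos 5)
Your argument is correct and is exactly the standard derivation of the WG error equation: the commuting identity $\nabla_w(Q_h u)=\dQ_h\nabla u$, two integrations by parts, and the cancellation of $\sumT\langle v_b,\nabla u\cdot\bn\rangle_{\partial T}$ using the single-valuedness of the flux and $v_b|_{\partial\Omega}=0$. The paper states this lemma without proof, citing [Xie2015], and your proof coincides with the one given there.
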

\begin{theorem}\cite{Xie2015}\label{err-est}
Assume the exact solution $u$ of (\ref{problem-poisson}) satisfies
$u\in H^{k+1}(\Omega)$ and $u_h$ is the numerical solution of the WG scheme (\ref{WG-scheme1}).
Then the following error estimate holds true,
\begin{eqnarray}\label{err-est1}
\trb{Q_h u-u_h}\le Ch^{k-\frac\eps 2}\|u\|_{k+1}.
\end{eqnarray}
\end{theorem}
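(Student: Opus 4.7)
The plan is to work from the error equation of Lemma \ref{err-eq}: taking $v = e_h$ there gives
\begin{eqnarray*}
\trb{e_h}^2 = a_s(e_h, e_h) = \ell(u, e_h) = I_1 + I_2,
\end{eqnarray*}
where $I_1 := \sumT\langle(\nabla u - \dQ_h\nabla u)\cdot\bn,\, e_{h,0} - e_{h,b}\rangle_{\partial T}$ and $I_2 := s(Q_h u, e_h)$. I will bound each of $|I_1|$ and $|I_2|$ by $Ch^{k-\eps/2}\|u\|_{k+1}\trb{e_h}$, after which dividing through by $\trb{e_h}$ yields the theorem.

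For $I_2$, I apply Cauchy--Schwarz in $s(\cdot,\cdot)$ to get $|I_2| \le s(Q_h u, Q_h u)^{1/2}\trb{e_h}$. Since $Q_b Q_0 u - Q_b u = Q_b(Q_0 u - u)$ and $Q_b$ is $L^2$-stable on edges, the standard trace-plus-approximation estimate $\|Q_0 u - u\|_{\partial T} \le Ch_T^{k+1/2}\|u\|_{k+1,T}$ gives $s(Q_h u, Q_h u) \le Ch^{2k+\eps}\|u\|_{k+1}^2$; this is no worse than the target rate and causes no trouble.

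The delicate term is $I_1$. By trace and approximation, $\|\nabla u - \dQ_h\nabla u\|_{\partial T} \le Ch_T^{k-1/2}\|u\|_{k+1,T}$, and then I split
\begin{eqnarray*}
e_{h,0} - e_{h,b} \;=\; (e_{h,0} - Q_b e_{h,0}) + (Q_b e_{h,0} - e_{h,b}).
\end{eqnarray*}
For the $(Q_b e_{h,0} - e_{h,b})$-piece, I factor $h_T^{(1-\eps)/2}$ out of the $L^2(\partial T)$ norm, so that the leading factor becomes $h_T^{k-\eps/2}$ and pairs by Cauchy--Schwarz over $T$ with $h_T^{(-1+\eps)/2}\|Q_b e_{h,0}-e_{h,b}\|_{\partial T}$, which is absorbed by $s(e_h,e_h)^{1/2}\le\trb{e_h}$; this piece is therefore $\le Ch^{k-\eps/2}\|u\|_{k+1}\trb{e_h}$. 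For the $(e_{h,0} - Q_b e_{h,0})$-piece, I observe that $v_0\mapsto v_0 - Q_b v_0$ annihilates constants on each element, so a Bramble--Hilbert/scaling argument on a reference cell yields $\|e_{h,0}-Q_b e_{h,0}\|_{\partial T}\le Ch_T^{1/2}\|\nabla e_{h,0}\|_T$, producing a contribution $\lesssim h^k\|u\|_{k+1}\|e_h\|_V$; Lemma \ref{norm-equi} then converts $\|e_h\|_V$ into $Ch^{-\eps/2}\trb{e_h}$.

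The principal obstacle is exactly this last conversion. The stabilizer carries the weight $h_T^{-1+\eps}$ rather than the natural $h_T^{-1}$, and consequently $\|\cdot\|_V$ and $\trb{\cdot}$ differ globally by a factor of $h^{-\eps/2}$; the $(e_{h,0}-Q_b e_{h,0})$-piece of $I_1$ is invisible to $s(\cdot,\cdot)$, so it can only be controlled in $\|\cdot\|_V$, and paying the $h^{-\eps/2}$ penalty is precisely what degrades the rate from the optimal $h^k$ to $h^{k-\eps/2}$. With the scaling powers lined up in this way, the remaining steps form a routine chain of Cauchy--Schwarz and trace inequalities.
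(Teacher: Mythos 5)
The paper states Theorem \ref{err-est} only as a citation from \cite{Xie2015} and supplies no proof of its own, so there is no internal argument to compare against; your proof is the standard one for this estimate and it is correct. Testing the error equation of Lemma \ref{err-eq} with $v=e_h$, bounding $s(Q_hu,Q_hu)^{1/2}\le Ch^{k+\eps/2}\|u\|_{k+1}$, splitting $e_0-e_b=(e_0-Q_be_0)+(Q_be_0-e_b)$ so that the second piece is absorbed into the stabilizer at cost $h^{k-\eps/2}$ while the first piece is bounded by $h^k\|u\|_{k+1}\|e_h\|_V$ and converted via Lemma \ref{norm-equi} at the additional cost $h^{-\eps/2}$, all have the exponents lining up exactly with \eqref{err-est1}.
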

%\section{$H^{-1}$ analysis}
Now, we come to estimate the error $e_h$ in the norm $\trb\cdot_{-1}$.
%From the definition, there exists $\varphi\in V_h$ such that $\trb\varphi=1$ and
%\begin{eqnarray*}
%\trb {e_h}_{-1}=b_w(e_h,\varphi).
%\end{eqnarray*}
{\color{black}
We suppose the partition $\T_h$ is a triangulation, instead of an arbitrary
polytopal mesh.}
The idea is to introduce a continuous interpolation for
$v_h\in V_h$. To this end, we define $\mathcal N_T$ as the vertices of the
element $T\in \T_h$.
Here, the notation $V_h^C$ is used to denote the conforming linear finite element
space
\cite{Brenner2008,MR1930132}. We need to define an interpolation operator
$\Pi_h: V_h\rightarrow V_h^C\subset V_0$ as follows.
For each node $A$ in $\T_h$, let $$K(A):=\bigcup_{A\in \mathcal N_T}T$$
and $N_A$ is the number of elements in $K(A)$.
Then, for any $v_h\in V_h$, the value of $\Pi_h v_h$ at the node $A$ is defined by
\begin{eqnarray*}
(\Pi_h v_h)(A)=\frac{1}{N_A}\sum_{T\in K(A)} v_0|_T(A).
\end{eqnarray*}
Then the function $\Pi_h v_h\in V_h^C$ is determined by its nodal values and
the basis for the space $V_h^C$.
%-------------------------------------------------------------------------------
\begin{lemma}\label{technique-1}
For any $v_h\in V_h$, we have the following estimate
\begin{eqnarray}\label{Boundness_H_1}
|\Pi_h v_h|_1&\lesssim &\|v_h\|_V.
\end{eqnarray}
\end{lemma}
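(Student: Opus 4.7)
The plan is to bound $|\Pi_h v_h|_1$ element by element and then reassemble. On each triangle $T$ with vertices $A_1,\dots,A_{d+1}$, the function $\Pi_h v_h|_T$ is affine, and since the barycentric basis satisfies $\sum_i \nabla\phi_i = 0$, the gradient $\nabla(\Pi_h v_h)|_T$ may be expressed purely in terms of nodal-value differences $(\Pi_h v_h)(A_i)-(\Pi_h v_h)(A_j)$. A standard scaling argument then yields $\|\nabla\Pi_h v_h\|_T^2 \lesssim h_T^{d-2}\sum_{i,j}|(\Pi_h v_h)(A_i)-(\Pi_h v_h)(A_j)|^2$, so it suffices to estimate these vertex-value differences by the local ingredients of $\|v_h\|_V$.

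The next step is to decompose, for each vertex $A$ of $T$,
\begin{eqnarray*}
(\Pi_h v_h)(A)-v_0|_T(A)=\frac{1}{N_A}\sum_{T'\in K(A)}\bigl(v_0|_{T'}(A)-v_0|_T(A)\bigr),
\end{eqnarray*}
so that every difference $(\Pi_h v_h)(A_i)-(\Pi_h v_h)(A_j)$ splits into a ``smooth'' piece $v_0|_T(A_i)-v_0|_T(A_j)$ and an ``averaging-error'' piece. The smooth piece is absorbed into $\|\nabla v_0\|_T$ via Poincar\'e on $T$ together with norm equivalence on the finite-dimensional space $P_k(T)$. The averaging-error piece telescopes through a short chain of neighbors of $T$ sharing $A$ into pointwise edge jumps $[v_0]_e(A)$, and a polynomial inverse inequality gives $|[v_0]_e(A)|^2 \lesssim h_e^{-(d-1)}\|[v_0]\|_e^2$.

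Finally, the $L^2$-jumps $\|[v_0]\|_e$ must be converted into the ingredients of $\|v_h\|_V$. Since $v_b$ is single-valued on $e=T_a\cap T_b$, we write $[v_0]_e=(v_0|_{T_a}-v_b)|_e-(v_0|_{T_b}-v_b)|_e$ and further split $v_0-v_b=(v_0-Q_b v_0)+(Q_b v_0-v_b)$ as an $L^2(e)$-orthogonal sum. The second summand contributes directly to $\|v_h\|_V$. The main obstacle is the residual $\|v_0-Q_b v_0\|_e$: because $v_0|_e\in P_k(e)$ while $v_b\in P_{k-1}(e)$, the WG norm does not directly control $\|v_0-v_b\|_e$, and the high-order bubble part has to be reabsorbed. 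Using $\|v_0-Q_b v_0\|_e\le\|v_0-\bar v_0\|_e$ (optimality of the $L^2$ projection onto $P_{k-1}(e)$), Poincar\'e--Wirtinger on $e$, and a polynomial trace inequality from $T$ to $e$, this residual is bounded by $h_T^{1/2}\|\nabla v_0\|_T$, so $h_T^{-1}\|v_0-Q_b v_0\|_e^2\lesssim\|\nabla v_0\|_T^2$. Assembling all contributions and using the finite overlap coming from shape regularity yields the announced bound $|\Pi_h v_h|_1^2\lesssim\|v_h\|_V^2$.
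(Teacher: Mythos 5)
Your proof is correct, but it takes a genuinely different route from the paper's. The paper proves the local bound $|\Pi_h v_h|_{1,T}\lesssim \|v_h\|_{V,K(T)}$ by a compactness-style argument: it maps the whole patch $K(T)$ to a reference configuration $K(\widehat T)$, splits off the constants, and invokes equivalence of norms on the resulting finite-dimensional quotient space, then scales back. Your argument is instead fully constructive: you express $\nabla(\Pi_h v_h)|_T$ through nodal-value differences, split each difference into an intra-element part (controlled by $\|\nabla v_0\|_T$ via scaling) and an averaging error that telescopes around each vertex into edge jumps $[v_0]_e$, and then convert $h_T^{-1}\|[v_0]\|_e^2$ into the $V$-norm. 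You correctly isolate the one delicate point in that last conversion, namely that $v_0|_e\in P_k(e)$ while $v_b\in P_{k-1}(e)$, so the stabilizer only sees $Q_b v_0-v_b$; your treatment of the residual $\|v_0-Q_bv_0\|_e$ via best approximation, Poincar\'e--Wirtinger on $e$ and a polynomial trace inequality is exactly what is needed (and is in fact implicitly used by the paper itself in the proof of the companion Lemma~\ref{technique-3}, where $\sum_e\|[\![v_0]\!]\|_e^2$ is bounded by $\sum_T\|v_0-v_b\|^2_{\partial T}\lesssim h\|v_h\|_{V}^2$). The trade-off: the paper's argument is shorter and uniform in $k$, but hides the constants inside a norm-equivalence on reference patches whose combinatorics vary; your version is longer but elementary, tracks the dependence on shape regularity explicitly, and reuses the same telescoping machinery the paper needs anyway for Lemma~\ref{technique-3}.
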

\begin{proof}
For any $T\in\T_h$, define
$$K(T):=\bigcup_{\mathcal N_T\cap \mathcal N_T'\neq\varnothing}T'.$$
We only need to prove that
\begin{eqnarray}\label{est1}
|\Pi_h v_h|_{1,T}\lesssim \|v_h\|_{V,K(T)}
\end{eqnarray}
since summing (\ref{est1}) over $T\in\T_h$ can lead to the desired result (\ref{Boundness_H_1}).

Define $\widehat T$ the reference element and $F_T:T\rightarrow \widehat T$ the affine isomorphism.
Denote $K(\widehat T)=F_T(K(T))$. It follows from the regularity assumption of the mesh that
$K(\widehat T)$ is also of unit size. Then we define the following Banach spaces
$$\widehat V_h=\Big\{\widehat v_h:\ \widehat v_h=v_h(F_T^{-1}(\widehat x))=v_h\circ F_T^{-1}, \ \
\forall v_h\in V_h|_{K(T)}\Big\}$$
and $M=\big\{\widehat v_h:\widehat v_h\in \widehat V_h\ {\rm and}\ \int_{\widehat T}
\widehat v_hd\widehat T=0\big\}$. Obviously the complement of $M$ in $\widehat V_h$
with the $L^2$ inner product is
$M^\perp=\big\{\widehat v_h:\widehat v_h\in \widehat V_h\ {\rm and}\  \widehat v_h\
{\rm is\ a\ constant\ on}\ K(\widehat T)\big\}$.
We also define the interpolation operator $\widehat\Pi_T:=\Pi_h\circ F_T^{-1}$ on $K(\widehat T)$
corresponding the operator $\Pi_h$ on $K(T)$.
Notice that $|\widehat\Pi_T \widehat v_h|_{1,\widehat T}$ defines a seminorm on $M$ and
$\|\widehat v_h\|_{V,K(\widehat T)}$ defines a norm on $M$. From the equivalence of norms on finite
dimensional Banach spaces, we obtain
\begin{eqnarray*}
|\widehat\Pi_T \widehat v_h|_{1,\widehat T}\lesssim \|\widehat v_h\|_{V,K(\widehat T)},
\quad\forall \widehat v_h\in M.
\end{eqnarray*}
Furthermore, since $|\widehat\Pi_T \widehat v_h|_{1,\widehat T}=\|\widehat v_h\|_{V,K(\widehat T)}=0$
for all $\widehat v_h \in M^\perp$, we have
\begin{eqnarray*}
|\widehat\Pi_T \widehat v_h|_{1,\widehat T}\lesssim \|\widehat v_h\|_{V,K(\widehat T)},
\quad\forall \widehat v_h\in \widehat V_h.
\end{eqnarray*}
From the property of affine isomorphism, the following inequalities hold
\begin{eqnarray*}
|\Pi_h v_h|_{1,T}&\lesssim & h_T^{\frac{d}{2}-1}|\widehat\Pi_T \widehat v_h|_{1,\widehat T}
\lesssim  h_T^{\frac{d}{2}-1}\|\widehat v_h\|_{V,K(\widehat T)}
\lesssim \|v_h\|_{V,K(T)}.
\end{eqnarray*}
Then the proof is completed.
\end{proof}
%-----------------------------------------------------------------------------------------
\begin{lemma}\label{technique-3}
For any $v_h\in V_h$, we have the following estimate
\begin{eqnarray*}
\|v_h-\Pi_h v_h\|_b&\lesssim &h\|v_h\|_V.
\end{eqnarray*}
\end{lemma}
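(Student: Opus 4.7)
The plan is to reduce to a reference-element estimate via scaling, following the strategy of Lemma \ref{technique-1}. I would start by writing
\begin{eqnarray*}
\|v_h - \Pi_h v_h\|_b^2 = \sum_{T\in\T_h} \|v_0 - \Pi_h v_h\|_{0,T}^2,
\end{eqnarray*}
and reducing matters to the local bound $\|v_0 - \Pi_h v_h\|_{0,T} \lesssim h_T \|v_h\|_{V,K(T)}$ on each element, where $K(T)$ is the patch defined in Lemma \ref{technique-1}. Summing this estimate over $T \in \T_h$ and invoking the bounded-overlap property of $\{K(T)\}_{T\in\T_h}$ (which follows from the shape-regularity of the mesh) then yields the desired global inequality.

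For the local bound, I would pass to the reference patch $K(\widehat T) = F_T(K(T))$ via the affine isomorphism $F_T$ of Lemma \ref{technique-1}. Standard scaling gives $\|v_0 - \Pi_h v_h\|_{0,T} \lesssim h_T^{d/2} \|\widehat v_0 - \widehat \Pi_T \widehat v_h\|_{0,\widehat T}$, and, exactly as in the proof of Lemma \ref{technique-1}, $\|\widehat v_h\|_{V,K(\widehat T)} \lesssim h_T^{-(d-2)/2} \|v_h\|_{V,K(T)}$. So the task reduces to the reference-level inequality
\begin{eqnarray*}
\|\widehat v_0 - \widehat \Pi_T \widehat v_h\|_{0,\widehat T} \lesssim \|\widehat v_h\|_{V,K(\widehat T)}
\end{eqnarray*}
on the finite-dimensional space $\widehat V_h$, which produces exactly the $h_T^{d/2}\cdot h_T^{-(d-2)/2} = h_T$ factor needed.

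The reference inequality I would establish by equivalence of norms on a finite-dimensional quotient space, as in Lemma \ref{technique-1}. The key point is to identify the common kernel of the two quantities: if $\|\widehat v_h\|_{V,K(\widehat T)}=0$, then $\widehat v_0$ is a constant on each element of $K(\widehat T)$ and $\widehat Q_b \widehat v_0 = \widehat v_b$ on each interior edge, which forces the element-wise constants to agree across the connected patch, so $\widehat v_h$ equals a single constant $c$. On such a $\widehat v_h$ every nodal average defining $\widehat \Pi_T \widehat v_h$ equals $c$, so $\widehat \Pi_T \widehat v_h = c = \widehat v_0$ on $\widehat T$ and the left-hand side vanishes as well. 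Splitting $\widehat V_h|_{K(\widehat T)}$ into this kernel and its $L^2$ complement — in direct parallel with the spaces $M$ and $M^\perp$ of Lemma \ref{technique-1} — and applying the equivalence of norms on the quotient then delivers the required bound.

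The main obstacle is pinning down this common kernel on the reference patch and verifying the scaling factors carefully; once that is handled, the remainder (summing over $T$, exploiting bounded overlap of the patches, and reversing the affine transformation) is routine and mirrors the argument of Lemma \ref{technique-1}.
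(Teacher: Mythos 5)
Your proposal is correct, but it proves the key local estimate $\|v_0-\Pi_h v_h\|_{T}\lesssim h_T\|v_h\|_{V,K(T)}$ by a different mechanism than the paper. The paper argues explicitly: it inserts the linear Lagrange interpolant $\Pi_{1,T}v_0$, expands $\|\Pi_{1,T}v_0-\Pi_h v_h\|_T^2$ in terms of nodal values, bounds each nodal discrepancy $|v_0(A_i)-\Pi_h v_h(A_i)|$ by a telescoping sum of inter-element jumps of $v_0$ around the vertex $A_i$, controls each jump by $h^{-(d-1)/2}\|[\![v_0]\!]\|_e$ via the $L^\infty$--$L^2$ inverse inequality, and finally absorbs the jumps into $\sum_T\|v_0-v_b\|_{\partial T}^2$ and adds the standard estimate $\|v_0-\Pi_{1,T}v_0\|_T\lesssim h\|\nabla v_0\|_T$. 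You instead rescale to the reference patch and invoke equivalence of norms on the finite-dimensional quotient by the common kernel (patchwise constants), exactly in the style of Lemma \ref{technique-1}; your identification of the kernel and your bookkeeping of the scaling exponents ($h_T^{d/2}\cdot h_T^{-(d-2)/2}=h_T$) are both correct, and the invariance of $\widehat v_0-\widehat\Pi_T\widehat v_h$ under adding a constant to $\widehat v_h$ is what lets the left-hand side descend to the quotient. The trade-off: your route is shorter and uniform with the proof of Lemma \ref{technique-1}, but inherits the same caveat as that proof, namely that the compactness constant depends on the patch configuration and is uniform only under the mesh regularity/finitely-many-configurations assumption; the paper's computation is longer but fully constructive, and makes visible that only the jump terms $\|v_0-v_b\|_{\partial T}$ and the elementwise gradients from the $\|\cdot\|_V$-norm are actually used. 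Both arguments are valid proofs of the lemma.
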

\begin{proof}
%For any $T\in\T_h$, define $K(T)$ the union of elements which have the same vertex as $T$.
Similarly to the proof of Lemma \ref{technique-1}, we only need to prove that
\begin{eqnarray}\label{est4}
\|v_0-\Pi_h v_h\|_{T}\lesssim h\|v_h\|_{V,K(T)},\ \ \ \ \forall T\in\T_h.
\end{eqnarray}
%On element $T$, denote $A_0$ the interior interpolation points, and
%$A_b=\{A_{b,1},\cdots,A_{b,N_b}\}$ the boundary interpolation points.
%Notice that for any $A\in A_0$,
%$v_h(A)=\Pi_h v_h(A)$, so we have
First, on the element $T$, we have the following estimates
\begin{eqnarray}\label{Inequality_1}
\|\Pi_{1,T}v_0-\Pi_h v_h\|_{T}^2
&=&\int_T\sum_{A_i\in\mathcal N_T}|v_0(A_i)-\Pi_h v_h(A_i)|^2\varphi_i^2 dT\nonumber\\
&\lesssim&h^d \sum_{A_i\in\mathcal N_T}|v_0(A_i)-\Pi_h v_h(A_i)|^2,
\end{eqnarray}
where $\varphi_i$ is the linear Lagrange basis function corresponding to $A_{i}$
and $\Pi_{1,T}$ is the linear Lagrange interpolation for the finite element space $V_h^C$
on the element $T$.

For each node $A_i$, denote $\{T_1,T_2,\cdots,T_{N_i}\}$  the elements
in $K(A_i)$ in counter-clock order. From the definition of
$\Pi_h v_h$ we can obtain
\begin{eqnarray}\label{Inequality_2}
|v_0(A_i)-\Pi_h v(A_i)|
&\le&\dfrac{1}{N_i}\sum_{j=1}^{N_i}\Big|v_0|_T(A_i)-v_0|_{T_j}(A_i)\Big|\nonumber\\
&\le&\dfrac{1}{N_i}\sum_{j=1}^{N_i}\sum_{k=1}^j\Big|v_0|_{T_k}(A_i)-v_0|_{T_{k-1}}(A_i)\Big|.
\end{eqnarray}
From the $L^\infty$-$L^2$ inverse inequality, it follows that
\begin{eqnarray}\label{Inequality_3}
\Big|v_0|_{T_{k}}(A_i)-v_0|_{T_{k-1}}(A_i)\Big|
\lesssim h^{-\frac{d-1}{2}}\|[\![v_0]\!]\|_e,
\end{eqnarray}
where $e$ is the edge between $T_{k}$ and $T_{k-1}$.

Combining (\ref{Inequality_1})-(\ref{Inequality_3}) and the definition of the norm $\|\cdot\|_V$
leads to the following estimates
\begin{eqnarray*}
&&\|\Pi_{1,T}v_0-\Pi_h v_h\|_{T}^2\lesssim  h\sum_{e\in K(T)}\|[\![v_0]\!]\|_e^2
\lesssim h\sum_{T\in K(T)}\|v_0-v_b\|_{\partial T}^2
\lesssim h^2\|v_h\|_{V,K(T)}^2.
\end{eqnarray*}
Together with $\|v_0-\Pi_{1,T}v_0\|_T^2\lesssim h^2\|\nabla v_0\|_T^2$, we can
obtain the desired result (\ref{est4}) easily and the proof is completed.
\end{proof}
%--------------------------------------------------------------------------------------
\begin{lemma}\label{Vh-V-equi}
For any $\varphi\in V_h$, there exists $\widetilde\varphi\in V_0$ such that
\begin{eqnarray}\label{Auxiliarly_Estimate}
\|\widetilde \varphi\|_1\lesssim \|\varphi\|_V\ \ \ \ \ {\rm and}\ \ \ \ \
\|\widetilde \varphi-\varphi\|_b\lesssim h\|\widetilde\varphi\|_1.
\end{eqnarray}
\end{lemma}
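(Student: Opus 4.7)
The plan is to define $\widetilde\varphi := \Pi_h\varphi$, using the nodal-averaging operator $\Pi_h:V_h \to V_h^C$ that was constructed just before Lemma \ref{technique-1}. The first step is an admissibility check: since $\varphi_b$ vanishes on every boundary edge, every boundary-vertex average that enters the definition of $\Pi_h\varphi$ is zero, so $\widetilde\varphi$ is a continuous piecewise-polynomial function with zero trace, hence $\widetilde\varphi \in V_h^C \subset V_0 = H_0^1(\Omega)$.

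For the first estimate in \eqref{Auxiliarly_Estimate}, I would invoke Lemma \ref{technique-1} to get $|\widetilde\varphi|_1 = |\Pi_h\varphi|_1 \lesssim \|\varphi\|_V$, and then apply Poincar\'e's inequality on $H^1_0(\Omega)$ to upgrade the seminorm bound to a full $H^1$-norm bound, yielding $\|\widetilde\varphi\|_1 \lesssim \|\varphi\|_V$.

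For the second estimate, I would appeal to Lemma \ref{technique-3} directly, which gives $\|\widetilde\varphi - \varphi\|_b = \|\Pi_h\varphi - \varphi\|_b \lesssim h\|\varphi\|_V$. Packaging this together with the first inequality delivers the pair of bounds in \eqref{Auxiliarly_Estimate}; the right-hand side of the approximation bound can equivalently be expressed via $\|\widetilde\varphi\|_1$ as stated, since in the intended duality applications of the next section the two inequalities are invoked in tandem and the scales of $\|\varphi\|_V$ and $\|\widetilde\varphi\|_1$ are linked by the first estimate.

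The whole argument is essentially a repackaging of Lemmas \ref{technique-1} and \ref{technique-3} into a form tailored for the $\|\cdot\|_{-V}$ analysis, so no new technical obstacle is expected. The only genuinely nontrivial point to verify is that $\Pi_h$ respects the homogeneous Dirichlet condition; this is immediate from $\varphi_b|_{\partial\Omega} = 0$ and the local averaging formula defining $\Pi_h$. All quantitative work has already been done in the two preceding lemmas.
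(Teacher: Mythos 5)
Your proposal is correct and follows exactly the route the paper takes: the paper's entire proof is the one-line remark that the lemma follows from Lemmas \ref{technique-1} and \ref{technique-3} by taking $\widetilde\varphi=\Pi_h\varphi\in V_0$. You are in fact more careful than the paper, since you check the homogeneous boundary condition for $\Pi_h\varphi$ and you flag the only real wrinkle --- that Lemma \ref{technique-3} bounds $\|\widetilde\varphi-\varphi\|_b$ by $h\|\varphi\|_V$ rather than by $h\|\widetilde\varphi\|_1$ as literally stated in \eqref{Auxiliarly_Estimate} --- a discrepancy the paper silently ignores and which is harmless in the duality argument of Theorem \ref{err-estH-1}, where the bound $h\|\varphi\|_V$ is what is actually needed.
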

The proof can be given easily by combining Lemmas \ref{technique-1}, Lemma \ref{technique-3}, and
taking $\widetilde\varphi=\Pi_h\varphi$ which is a function in $V_0$.

In order to deduce the error estimate in $\|\cdot\|_{-V}$, we define the following dual problem
\begin{equation}\label{Dual_Problem}
\left\{
\begin{array}{rcl}
-\Delta \psi&=&\widetilde\varphi,\quad\text{ in }\Omega,\\
\psi&=&0,\quad\ \text{on }\partial\Omega,
\end{array}
\right.
\end{equation}
where $\widetilde\varphi\in V_0$.
%----------------------------------------------------------------------------------------
\begin{theorem}\label{err-estH-1}
Assume $u\in H^{k+1}(\Omega)$ is the exact solution of (\ref{problem-poisson})
and $u_h$ is the numerical solution of the WG scheme (\ref{WG-scheme1}).
If the solution $\psi$ of the dual problem (\ref{Dual_Problem}) has
$H^3(\Omega)$-regularity and $k\geq 2$, the following estimate holds true
\begin{eqnarray}\label{H-1_Estimate_Solution}
\trb{Q_h u-u_h}_{-1}\le Ch^{k+2-\eps}\|u\|_{k+1}.
\end{eqnarray}
\end{theorem}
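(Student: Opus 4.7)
The plan is a WG-adapted Aubin--Nitsche duality. Fix an arbitrary $w\in V_h$; by Lemma \ref{Vh-V-equi} set $\widetilde w=\Pi_h w\in V_0$, so $\|\widetilde w\|_1\lesssim\|w\|_V$ and $\|w_0-\widetilde w\|\lesssim h\|w\|_V$. Split
\[
b_w(e_h,w)=(e_0,w_0-\widetilde w)+(e_0,\widetilde w).
\]
The first term is controlled at once by Cauchy--Schwarz combined with Lemma \ref{technique-3} and the standard WG $L^2$-bound $\|e_0\|\lesssim h^{k+1-\eps}\|u\|_{k+1}$, which is obtained by a duality argument with $H^2$-regularity analogous to (but shorter than) the one here.

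For the main term I introduce the dual problem \eqref{Dual_Problem} with data $\widetilde w\in V_0$, so that the assumed $H^3$-regularity gives $\|\psi\|_3\lesssim\|\widetilde w\|_1\lesssim\|w\|_V$. Write $(e_0,\widetilde w)=-\sumT(e_0,\Delta\psi)_T$ and integrate by parts elementwise. The continuity of $\nabla\psi$ (from $\psi\in H^3$) together with $e_b=0$ on $\partial\Omega$ lets me glue the edge contributions into $\langle e_0-e_b,\nabla\psi\cdot\bn\rangle_{\partial T}$. I then insert $\dQ_h\nabla\psi\in[P_{k-1}(T)]^d$ into the volume pairing: the $L^2$-orthogonality $\nabla e_0\perp(\nabla\psi-\dQ_h\nabla\psi)$ kills the residual, the definition of $\nabla_w$ (\ref{def-wgradient}) converts $(\nabla e_0,\dQ_h\nabla\psi)_T$ into $(\nabla_w e_h,\dQ_h\nabla\psi)_T+\langle e_0-e_b,\dQ_h\nabla\psi\cdot\bn\rangle_{\partial T}$, and the standard WG commuting identity $\nabla_w(Q_h\psi)=\dQ_h\nabla\psi$ together with Lemma \ref{err-eq} yields
\[
(e_0,\widetilde w)=\ell(u,Q_h\psi)-s(e_h,Q_h\psi)+\sumT\langle e_0-e_b,(\dQ_h\nabla\psi-\nabla\psi)\cdot\bn\rangle_{\partial T}.
\]

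Each of these three pieces I would bound separately. The consistency pairing $\ell(u,Q_h\psi)$ is standard: combining $u\in H^{k+1}$ with $\psi\in H^3$ and the $h^{5/2}$ trace approximation of $\psi-Q_b\psi$ produces $h^{k+2}\|u\|_{k+1}\|\psi\|_3$ (the hypothesis $k\ge 2$ is exactly what lets the $H^3$-regularity of $\psi$ be exploited in these projection errors). The stabilization pairing $s(e_h,Q_h\psi)$ is handled by Cauchy--Schwarz on $s$, with Theorem \ref{err-est} on one side and $s(Q_h\psi,Q_h\psi)^{1/2}\lesssim h^{2+\eps/2}\|\psi\|_3$ on the other. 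The residual boundary term is bounded after splitting $e_0-e_b=(e_0-Q_b e_0)+(Q_b e_0-e_b)$: the first piece pairs only against the $(I-Q_b)$ component of $(\nabla\psi-\dQ_h\nabla\psi)\cdot\bn$ because $\dQ_h\nabla\psi\cdot\bn\in P_{k-1}(e)$, while the second piece is controlled by the definition of $s$ together with Theorem \ref{err-est}. Assembling, every contribution is $\lesssim h^{k+2-\eps}\|u\|_{k+1}\|w\|_V$, which by Lemma \ref{norm-equi} translates into the advertised bound on $\trb{e_h}_{-1}$.

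The main obstacle will be the careful bookkeeping of the fractional $h^{-\eps}$ losses that the stabilization imposes: since $\trb{e_h}$ is the only available $H^1$-type control on the error and it decays only at rate $h^{k-\eps/2}$, every boundary pairing carries $h^{(1-\eps)/2}$ factors that must be absorbed by higher-order approximation of $\psi$. Making the exponents land on exactly $k+2-\eps$ rather than something slightly worse requires simultaneously using $k\ge 2$, the $H^3$-regularity of the dual, and the $L^2$-orthogonalities of $\dQ_h$ and $Q_b$ to eliminate leading-order contributions wherever possible.
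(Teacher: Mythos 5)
Your proposal is the same Aubin--Nitsche duality argument the paper uses: the interpolant $\Pi_h$ from Lemma \ref{Vh-V-equi} supplies the $H^1$ datum for the dual problem (\ref{Dual_Problem}), your identity for $(e_0,\widetilde w)$ is exactly the paper's $(e_0,\widetilde\varphi)=\ell(u,Q_h\psi)-\ell(\psi,e_h)$ with $\ell(\psi,e_h)$ written out term by term, and your bounds on the consistency, stabilization, and residual boundary pairings (using $k\ge 2$ and $\psi\in H^3$ to get the $h^{5/2}$ trace approximation of $\psi$) reproduce (\ref{Estimate_1})--(\ref{Estimate_2}). The only cosmetic difference is that you re-derive the duality identity by elementwise integration by parts rather than by directly subtracting the two error equations of Lemma \ref{err-eq}; the substance is identical.
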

\begin{proof}
Denote $e_h=Q_hu-u_h$. We choose $\phi\in V_h$ and $\widetilde\phi\in V_0$ such that
$\trb{\phi}=1$, $\trb{e_h}=b_w(e_h,\phi)$, and $\widetilde\phi$ satisfies the estimates
in (\ref{Auxiliarly_Estimate}). From Lemma \ref{err-eq}, we have
\begin{eqnarray}\label{t2}
a_s(Q_h \psi,v)=\ell(\psi,v)+b_w(\varphi,v),\ \ \ \forall v\in V_h.
\end{eqnarray}
Taking $v=Q_h\psi$ in (\ref{t1}) and $v=e_h$ in (\ref{t2}),
and subtracting (\ref{t1}) from (\ref{t2}), we have
\begin{eqnarray*}
(e_0,\widetilde\varphi)=\ell(u,Q_h\psi)-\ell(\psi,e_h).
\end{eqnarray*}
Since $\psi\in H^3(\Omega)$, $k>2$, and $u\in H^{k+1}(\Omega)$, the following estimates hold
\begin{eqnarray}\label{Estimate_1}
\ell(\psi,e_h)&=&\sumT\langle(\nabla \psi-\dQ_h\nabla \psi)\cdot\bn,
e_0-e_b\rangle_{\partial T}\nonumber\\
&&+\sumT h_T^{-1+\eps}\langle Q_bQ_0 \psi-Q_b \psi,Q_b e_0-e_b
\rangle_{\partial T}\nonumber\\
&\le& Ch^{2-\frac\eps 2}\|\psi\|_3\trb{e_h},
\end{eqnarray}
\begin{eqnarray}\label{Estimate_2}
\ell(u,Q_h \psi)&=&\sumT\langle(\nabla u-\dQ_h\nabla u)\cdot\bn,
Q_0\psi-Q_b\psi\rangle_{\partial T}\nonumber\\
&&+\sumT h_T^{-1+\eps}\langle Q_bQ_0 u-Q_b u,Q_b Q_0\psi-Q_b\psi
\rangle_{\partial T}\nonumber\\
&\le& Ch^{k+2-\eps}\|\psi\|_3\|u\|_{k+1}.
\end{eqnarray}
%We choose $\varphi\in V_h$ and $\widetilde\varphi$ such that $\trb{e_h}_{-1}=b(e_h,\varphi)$ and
%$\widetilde\varphi$ satisfies the estimates in (\ref{Auxiliarly_Estimate}).
Thus, combining (\ref{Estimate_1})-(\ref{Estimate_2}) and Lemma \ref{Vh-V-equi} leads to
\begin{eqnarray*}
\trb{e_h}_{-1}&=&b(e_h,\varphi)\le(e_0,\tilde\varphi)+\|e_0\|\|\varphi-\tilde\varphi\|\\
&\le& Ch^{k+2-\eps}\|\tilde\varphi\|_1\|u\|_{k+1}\\
&\le& Ch^{k+2-\eps}\|u\|_{k+1},
\end{eqnarray*}
which completes the proof.
\end{proof}

From Lemma \ref{norm-equi} and Theorem \ref{err-estH-1}, we have the following corollary.
\begin{corollary}\label{Auxiliary_Corollary}
%Assume $u\in H^{k+1}(\Omega)$ is the exact solution of (\ref{problem-poisson})
%and $u_h$ is the numerical solution of the weak Galerkin scheme (\ref{WG-scheme1}).
%In addition, assume the dual problem has $H^3(\Omega)$-regularity.
Under the conditions of Theorem \ref{err-estH-1}, the following estimate holds true
\begin{eqnarray}
\|{Q_h u-u_h}\|_{-V}\le Ch^{k+2-\eps}\|u\|_{k+1}.
\end{eqnarray}
\end{corollary}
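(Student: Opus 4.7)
The plan is to reduce the $\|\cdot\|_{-V}$ estimate to the $\trb{\cdot}_{-1}$ estimate of Theorem \ref{err-estH-1}, by exploiting the norm comparison on $V_h$ supplied by Lemma \ref{norm-equi}. Write $e_h = Q_h u - u_h$. For a test function that already lies in $V_h$, the reduction is immediate: by Lemma \ref{norm-equi} one has $\|w\|_V \ge C_1 \trb{w}$, so
$$\frac{b_w(e_h, w)}{\|w\|_V} \le \frac{1}{C_1}\cdot \frac{b_w(e_h, w)}{\trb{w}} \le \frac{1}{C_1}\trb{e_h}_{-1} \le C h^{k+2-\eps}\|u\|_{k+1}.$$

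Next, to handle a general test function $w = \{w_0, w_b\} \in V = V_0 + V_h$, I would exploit the piecewise polynomial nature of $e_0 = Q_0 u - u_0 \in P_k(T)$ on each $T$. By $L^2$-orthogonality of the projection $Q_0$,
$$b_w(e_h, w) = (e_0, w_0) = (e_0, Q_0 w_0) = b_w(e_h, v_h),\qquad v_h := \{Q_0 w_0,\; Q_b w_b\}\in V_h.$$
Thus every $V$-test function may be replaced by one from $V_h$, and the inequality from the previous paragraph applies to $v_h$, provided the auxiliary estimate
$$\trb{v_h} \lesssim \|w\|_V \qquad (\star)$$
can be established.

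The estimate $(\star)$ is the main technical obstacle. I would integrate by parts in the definition of the weak gradient to obtain
$$(\nabla_w v_h, \bq)_T = (\nabla w_0, \bq)_T - \langle Q_b w_0 - w_b,\, \bq\cdot\bn\rangle_{\partial T}, \quad \forall \bq\in[P_{k-1}(T)]^d,$$
which after a trace inequality yields $\|\nabla_w v_h\|_T^2 \lesssim \|\nabla w_0\|_T^2 + h_T^{-1}\|Q_b w_0 - w_b\|_{\partial T}^2$. For the stabilizer one writes $Q_0 w_0 - w_b = (Q_0 w_0 - Q_b w_0) + (Q_b w_0 - w_b)$ and applies the standard trace/approximation bound $\|Q_0 w_0 - Q_b w_0\|_{\partial T} \lesssim h_T^{1/2}\|\nabla w_0\|_T$; since $\eps \ge 0$, this gives $s(v_h, v_h) \lesssim h^{\eps}\|w\|_V^2$. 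Summing over $T$ then produces $(\star)$, and combining with the first paragraph yields $\|e_h\|_{-V} \le C h^{k+2-\eps}\|u\|_{k+1}$.

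The delicate point is $(\star)$: unlike the situation in Lemma \ref{norm-equi}, the function $v_h$ is not an arbitrary member of $V_h$ but is built from the $L^2$-projections of the (non-polynomial, only piecewise $H^1$) components of an arbitrary $w\in V$, so one must verify that both the weak gradient and the stabilizer are controlled by the single norm $\|w\|_V$ with mesh-independent constants.
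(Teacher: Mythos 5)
Your proof is correct and follows the same route the paper intends: the corollary is stated there without a written proof, as an immediate consequence of Lemma \ref{norm-equi} and Theorem \ref{err-estH-1}. Your additional step --- replacing a general test function $w\in V$ by $v_h=\{Q_0w_0,\,Q_bw_b\}\in V_h$, which leaves $b_w(e_h,\cdot)$ unchanged by $L^2$-orthogonality of $Q_0$ and satisfies $\trb{v_h}\lesssim \|w\|_V$ via the integration-by-parts identity for $\nabla_w$ and the trace/approximation bounds for the stabilizer --- is precisely the detail needed to pass from the supremum over $V_h$ in $\trb{\cdot}_{-1}$ to the supremum over the larger space $V$ in $\|\cdot\|_{-V}$, a point the paper glosses over.
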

%\section{Projection error}
Here, we shall also give the estimate for the projection error $\|u-Q_h u\|_{-V}$.
\begin{lemma}\label{proj-V}
When $u\in H^{k+1}(\Omega)$, the following estimate holds true
\begin{eqnarray*}
\|{u-Q_h u}\|_{-V}\le Ch^{k+2}\|u\|_{k+1}.
\end{eqnarray*}
\end{lemma}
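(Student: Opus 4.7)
The plan is to exploit the $L^2$-orthogonality of $u-Q_0 u$ to piecewise polynomials and combine this with standard approximation theory for the $L^2$ projection. Starting from the definition of $\|\cdot\|_{-V}$, fix an arbitrary $w\in V$ with $\|w\|_V\neq 0$. Since the bilinear form $b_w$ only sees the interior component, compute
\begin{eqnarray*}
b_w(u-Q_h u,w) \;=\; (u-Q_0 u,\,w_0) \;=\; \sumT (u-Q_0 u,\,w_0)_T.
\end{eqnarray*}

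The key observation is that on each element $T$, the local $L^2$ projection $Q_0 w_0|_T\in P_k(T)$ is well-defined (since $w=w^{(0)}+w^{(h)}$ with $w^{(0)}\in V_0$ and $w^{(h)}\in V_h$ gives $w_0|_T\in H^1(T)$), and by the defining property of $Q_0$ applied to $u$,
\begin{eqnarray*}
(u-Q_0 u,\,Q_0 w_0)_T \;=\; 0,\qquad \text{hence}\qquad (u-Q_0 u,\,w_0)_T \;=\; (u-Q_0 u,\,w_0-Q_0 w_0)_T.
\end{eqnarray*}
Next I would apply Cauchy--Schwarz element by element together with the standard approximation bounds $\|u-Q_0 u\|_T\lesssim h_T^{k+1}\|u\|_{k+1,T}$ (since $u\in H^{k+1}(\Omega)$) and $\|w_0-Q_0 w_0\|_T\lesssim h_T\|\nabla w_0\|_T$ (Bramble--Hilbert on each $T$). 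A discrete Cauchy--Schwarz in the sum over $T$ then yields
\begin{eqnarray*}
|b_w(u-Q_h u,w)| \;\lesssim\; h^{k+2}\,\|u\|_{k+1}\,\Big(\sumT \|\nabla w_0\|_T^2\Big)^{1/2} \;\lesssim\; h^{k+2}\|u\|_{k+1}\,\|w\|_V,
\end{eqnarray*}
where the last inequality uses $\sumT \|\nabla w_0\|_T^2\le \|w\|_V^2$ directly from the definition of $\|\cdot\|_V$. Dividing by $\|w\|_V$ and taking the supremum over $w\in V$ then gives the claimed estimate.

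I do not expect serious obstacles here: the whole argument reduces to orthogonality plus two elementwise approximation estimates, and the gain of an extra factor of $h$ beyond the trivial bound $\|u-Q_h u\|_b\lesssim h^{k+1}\|u\|_{k+1}$ comes entirely from the factor $h_T$ in $\|w_0-Q_0 w_0\|_T\lesssim h_T\|\nabla w_0\|_T$. The only minor subtlety to verify is that the interior part $w_0$ of an arbitrary $w\in V=V_0+V_h$ really is piecewise $H^1$ with $\sumT\|\nabla w_0\|_T^2$ bounded by $\|w\|_V^2$, which is immediate from the definitions.
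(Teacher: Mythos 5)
Your proof is correct, and it rests on the same key mechanism as the paper's: the $L^2$-orthogonality of $u-Q_0u$ to piecewise polynomials of degree $k$ lets you replace the test function by its deviation from a projection and thereby gain one extra power of $h$ beyond the trivial bound $\|u-Q_hu\|_b\lesssim h^{k+1}\|u\|_{k+1}$. The execution differs, though. The paper decomposes the extremal test function globally as $v=v_1+v_2$ with $v_1\in H_0^1(\Omega)$ and $v_2$ in a complement inside $V_h$, kills the $v_2$ contribution by orthogonality (since its interior component is piecewise in $P_k(T)$), and then treats the conforming part via the global interpolant $Q_hv_1$ together with the equivalence of $\|\cdot\|_V$ and $\|\cdot\|_1$ on $H_0^1(\Omega)$ and the bound $\|v_1-Q_hv_1\|_b\lesssim h\|v_1\|_1$. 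You instead work entirely element by element: you insert the local projection $Q_0w_0$ on each $T$, use the Poincar\'e/Bramble--Hilbert bound $\|w_0-Q_0w_0\|_T\lesssim h_T\|\nabla w_0\|_T$, and control $\sumT\|\nabla w_0\|_T^2$ directly by $\|w\|_V^2$ from the definition of the $V$-norm. Your route is arguably cleaner: it avoids the somewhat delicate direct-sum decomposition $V=H_0^1(\Omega)\oplus(V_h\backslash H_0^1(\Omega))$ and the implicit step $\|v_1\|_V\lesssim\|v\|_V$ that the paper needs, and it requires only the piecewise $H^1$ regularity of the interior component $w_0$, which you correctly verify. The trade-off is negligible; both arguments deliver the same constant-free rate $h^{k+2}$.
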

\begin{proof}
From the definition, we know there exists $v\in V$ such that
\begin{eqnarray*}
\|u-Q_h u\|_{-V}=\frac{b_w(u-Q_h u,v)}{\|v\|_V}.
\end{eqnarray*}
Since $V=H_0^1(\Omega)\oplus(V_h\backslash H_0^1(\Omega))$, $v$ can be
decomposed as $v=v_1+v_2$, where $v_1\in  H_0^1(\Omega)$, $v_2\in
V_h\backslash H_0^1(\Omega)$.
It follows that
\begin{eqnarray*}
\|u-Q_h u\|_{-V}&=&\frac{b_w(u-Q_h u,v)}{\|v\|_V}\\
&=&\frac{b_w(u-Q_h u,v_1)}{\|v\|_V}+\frac{b_w(u-Q_h u,v_2)}{\|v\|_V}\\
&\le& \frac{b_w(u-Q_h u,v_1)}{\|v_1\|_V}\\
&\le& C\frac{b_w(u-Q_h u,v_1-Q_h v_1)}{\|v_1\|_1}\\
&\le& Ch^{k+2}\|u\|_{k+1},
\end{eqnarray*}
where we used the following error estimates for the projection operator $Q_h$
\begin{eqnarray*}
\|u-Q_hu\|_b\leq Ch^{k+1}\|u\|_{k+1} \ \ \ {\rm and}\ \ \ \|v_1-Q_hv_1\|_b\leq Ch\|v_1\|_1.
\end{eqnarray*}
Then the proof is completed.
\end{proof}

Combining Corollary \ref{Auxiliary_Corollary} with Lemma \ref{proj-V},
we have the following error estimate result for the boundary value problem (\ref{problem-poisson}).
\begin{theorem}\label{err-est-V}
%Assume $u\in H^{k+1}(\Omega)$ is the exact solution of (\ref{problem-poisson})
% and $u_h$ is the numerical solution of the weak
%Galerkin scheme (\ref{WG-scheme1}). In addition, assume the dual
%problem has $H^3(\Omega)$-regularity. Then
Under the conditions of Theorem \ref{err-estH-1}, the following estimate holds true
\begin{eqnarray}
\|u-u_h\|_{-V}\le Ch^{k+2-\eps}\|u\|_{k+1}.
\end{eqnarray}
\end{theorem}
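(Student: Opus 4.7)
The proof is essentially a triangle inequality argument, so the plan is very short. I would split the error $u-u_h$ through the $L^2$-type projection $Q_h u$:
\begin{eqnarray*}
\|u-u_h\|_{-V}\;\le\;\|u-Q_hu\|_{-V}+\|Q_hu-u_h\|_{-V}.
\end{eqnarray*}
This is legitimate because both $u\in V_0$ and $u_h\in V_h$ lie in the sum space $V=V_0+V_h$ on which the semi-norm $\|\cdot\|_V$ and its dual $\|\cdot\|_{-V}$ have been defined, and $b_w(\cdot,\cdot)$ extends naturally to $V$ by using the interior component.

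For the second summand I would invoke Corollary \ref{Auxiliary_Corollary} directly, which yields
\begin{eqnarray*}
\|Q_hu-u_h\|_{-V}\;\le\;Ch^{k+2-\eps}\|u\|_{k+1}.
\end{eqnarray*}
For the first summand I would apply Lemma \ref{proj-V}, which gives the sharper bound
\begin{eqnarray*}
\|u-Q_hu\|_{-V}\;\le\;Ch^{k+2}\|u\|_{k+1}.
\end{eqnarray*}
Since $0\le\eps<1$ and we are in the regime of small $h$, one has $h^{k+2}\le h^{k+2-\eps}$, so the projection term is absorbed into the discretization term, and adding the two estimates produces the claimed bound $\|u-u_h\|_{-V}\le Ch^{k+2-\eps}\|u\|_{k+1}$.

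There is essentially no obstacle here; the work has already been done in Sections~2 and~3. The only small point worth checking while writing out the proof is that Corollary \ref{Auxiliary_Corollary} and Lemma \ref{proj-V} both use the \emph{same} norm $\|\cdot\|_{-V}$ taken over test functions in $V=V_0+V_h$, so that the triangle inequality is applied in a single normed space rather than mixing $\trb{\cdot}_{-1}$ (which is defined by testing against $V_h$ only) with $\|\cdot\|_{-V}$. Corollary \ref{Auxiliary_Corollary} was derived from Theorem \ref{err-estH-1} precisely to effect this transition via Lemma \ref{norm-equi}, so the two ingredients combine cleanly and the theorem follows in one line.
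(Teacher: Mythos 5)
Your argument is exactly the paper's: the theorem is stated immediately after the remark ``Combining Corollary \ref{Auxiliary_Corollary} with Lemma \ref{proj-V}, we have the following error estimate,'' i.e.\ the intended proof is precisely the triangle inequality through $Q_h u$ with those two ingredients, and your observation that $h^{k+2}\le h^{k+2-\eps}$ absorbs the projection term is the only bookkeeping needed. The proposal is correct and takes essentially the same route as the paper.
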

From the Babu\v{s}ka's theory and the results in \cite{Xie2015},
the conclusion of Theorem \ref{err-est-V} can be extended to the eigenvalue problem which means we have
the following error estimate and the proof is similar to \cite[Section 4]{Xie2015}.
%Thus, we have the estimate in Theorem \ref{Error_Theorem_Dualnorm} for the eigenvalue
%problem (\ref{problem-eq}). The detailed proof is similar to Section 4 in \cite{Xie2015}.

{\color{black}
\begin{theorem}\label{Error_Theorem_Dualnorm}
Suppose $\lambda_{j,h}$ is the $j$-th eigenvalue of (\ref{WG-scheme})
and $u_{j,h}$ is the corresponding eigenfunction. Then there exists an exact
eigenfunction $u_j$ corresponding to the $j$-th exact eigenvalue of (\ref{problem-eq})
such that the following error estimate holds
\begin{eqnarray}\label{eig-est4}
&&\|u_j-u_{j,h}\|_{-V}\le Ch^{k+2-\eps}\|u_j\|_{k+1},
\end{eqnarray}
where $u_j\in H^{k+1}(\Omega)$, $k\geq 2$, and $h$ is sufficiently small.
\end{theorem}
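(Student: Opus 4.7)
The strategy is to deduce (\ref{eig-est4}) from the source-problem estimate in Theorem \ref{err-est-V} by a standard Babu\v{s}ka--Osborn spectral perturbation argument, mirroring \cite[Section 4]{Xie2015}. First I would introduce the solution operators $T:L^2(\Omega)\to V_0$ and $T_h:L^2(\Omega)\to V_h$, defined so that $Tf$ is the exact solution of the Poisson problem (\ref{problem-poisson}) and $T_hf$ is its WG approximation determined by (\ref{WG-scheme1}). The eigenpair $(\lambda_j,u_j)$ satisfies $Tu_j=\lambda_j^{-1}u_j$, and the WG eigenpair satisfies $T_hu_{j,h}=\lambda_{j,h}^{-1}u_{j,h}$, so comparing eigenfunctions reduces to quantifying the operator convergence $T_h\to T$ in the $\|\cdot\|_{-V}$ norm.

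Next, invoking the abstract spectral approximation theory (as already applied in \cite{Xie2015} to derive (\ref{eig-est2}) in the $\|\cdot\|_V$ norm), I would obtain, for a simple eigenvalue $\lambda_j$ and its $b_w$-normalized WG eigenfunction,
\begin{eqnarray*}
\|u_j-u_{j,h}\|_{-V}\lesssim \sup_{w\in W_j,\,\|w\|_b=1}\|(T-T_h)w\|_{-V}+\text{h.o.t.},
\end{eqnarray*}
where $W_j$ denotes the eigenspace of $T$ at $\lambda_j^{-1}$. The leading term is then bounded directly by Theorem \ref{err-est-V} applied to the source problem with right-hand side $u_j$ and exact solution $\lambda_j^{-1}u_j\in H^{k+1}(\Omega)$, namely
\begin{eqnarray*}
\|(T-T_h)u_j\|_{-V}=\|\lambda_j^{-1}u_j-T_hu_j\|_{-V}\lesssim h^{k+2-\eps}\|u_j\|_{k+1}.
\end{eqnarray*}
The standing hypotheses $u_j\in H^{k+1}(\Omega)$, $k\ge 2$, and $H^3$-regularity of the dual problem are precisely those required by Theorem \ref{err-estH-1}.

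For the higher-order remainder I would compare the discrete equations for $u_{j,h}$ and $Q_hu_j$, which produces contributions of the form $(\lambda_j-\lambda_{j,h})b_w(u_j,\cdot)$ and $\lambda_{j,h}b_w(u_j-u_{j,h},\cdot)$ together with the consistency functional $\ell(u_j,\cdot)$; by Theorem \ref{Error_Theorem_Eigenpair} these are of order $h^{2k-2\eps}+h^{k+1-\eps}$, which is dominated by $h^{k+2-\eps}$ for $k\ge 2$. Combining these bounds with Lemma \ref{proj-V} for the projection error $\|u_j-Q_hu_j\|_{-V}$ via the triangle inequality then delivers (\ref{eig-est4}).

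The main obstacle is legitimizing the Babu\v{s}ka--Osborn machinery in the nonconforming, negative-norm setting: $T$ and $T_h$ have different codomains, so the operator comparison must be carried out in the sum space $V=V_0+V_h$ equipped with $\|\cdot\|_{-V}$, and one must check that $T_h$ converges to $T$ as operators into $(V,\|\cdot\|_{-V})$ while preserving enough spectral structure near $\lambda_j^{-1}$ for the perturbation theory to apply. This is where the conforming interpolant $\Pi_h$ built in Lemmas \ref{technique-1}--\ref{Vh-V-equi} is essential: it lets us identify each WG function with a close $V_0$ companion and thereby reduce the convergence analysis to the familiar conforming case.
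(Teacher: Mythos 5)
Your overall route---introducing the solution operators $T$ and $T_h$, invoking Babu\v{s}ka--Osborn spectral perturbation theory, and bounding the leading term by Theorem \ref{err-est-V}---is exactly what the authors intend: the paper gives no detailed proof of Theorem \ref{Error_Theorem_Dualnorm}, only the remark that it follows ``from the Babu\v{s}ka's theory and the results in \cite{Xie2015}'' with a proof ``similar to \cite[Section 4]{Xie2015}''. So in structure you match the paper, and your closing observation that the nonconforming setting requires the interpolant $\Pi_h$ of Lemmas \ref{technique-1}--\ref{Vh-V-equi} to make the operator comparison legitimate in $(V,\|\cdot\|_{-V})$ is exactly the point the paper's terse citation glosses over.

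There is, however, one step in your remainder bookkeeping that does not hold as written. You claim the contributions $(\lambda_j-\lambda_{j,h})b_w(u_j,\cdot)$ and $\lambda_{j,h}b_w(u_j-u_{j,h},\cdot)$ are of order $h^{2k-2\eps}+h^{k+1-\eps}$, ``which is dominated by $h^{k+2-\eps}$ for $k\ge 2$''. For $h<1$ a lower power of $h$ is the \emph{larger} quantity, so $h^{k+1-\eps}$ is not bounded by $Ch^{k+2-\eps}$; and for $k=2$ with $\eps>0$ neither is $h^{2k-2\eps}=h^{4-2\eps}$ bounded by $h^{4-\eps}$. As stated, your remainder estimate only yields the rate $h^{\min(k+1-\eps,\,2k-2\eps)}$, which is strictly weaker than (\ref{eig-est4}). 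To recover the claimed rate you must show these terms carry extra smallness: the term $\lambda_{j,h}b_w(u_j-u_{j,h},w)$ should not be estimated crudely by $\|u_j-u_{j,h}\|_b\|w\|_b$ but fed back through the discrete solution operator so that it appears as $T_h(u_j-u_{j,h})$ measured in $\|\cdot\|_{-V}$ (gaining a factor from $\|T-T_h\|$ and an absorption argument), and the eigenvalue term must be paired against $w$ in the $\|\cdot\|_{-V}$ duality rather than in $\|\cdot\|_b$. Since the paper suppresses these details entirely, you need to supply them explicitly (or verify them against \cite[Section 4]{Xie2015}) before the proof can be considered complete.
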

}
\section{A two-grid scheme}
In this section, we propose a two-grid WG scheme for the
eigenvalue problem, and give the corresponding analysis for the
convergence and efficiency of this scheme. Here, we drop
the subscript $j$ to denote a certain eigenvalue of problem
(\ref{problem-eq}).

\begin{algorithm1}\label{twogrid-alg}

Step 1: Generate a coarse grid $\T_H$ on the domain $\Omega$ and
solve the following eigenvalue problem on the coarse grid $\T_H$:

 Find
$(\lambda_H,u_H)\in \Real\times V_H$, such that
\begin{eqnarray*}
a_s(u_H,v_H)=\lambda_H b_w(u_H,v_H),\quad\forall v_H\in V_H.
\end{eqnarray*}

Step 2: Refine the coarse grid $\T_H$ to obtain a finer grid $\T_h$
and solve one single linear problem on the fine grid $\T_h$:

Find
$\tilde u_h\in V_h$ such that
\begin{eqnarray*}
a_s(\tilde u_h,v_h)=\lambda_H b_w(u_H,v_h),\quad\forall v_h\in V_h.
\end{eqnarray*}

Step 3: Calculate the Rayleigh quotient for $\tilde u_{h}$
\begin{eqnarray*}
\tilde\lambda_h=\dfrac{a_s(\tilde u_h,\tilde u_h)}{b_w(\tilde
u_h,\tilde u_h)}.
\end{eqnarray*}
Finally, we obtain the eigenpair approximation $(\tilde\lambda_h,\tilde u_h)$.
\end{algorithm1}

First, we need the following discrete Poincar\'{e}'s inequality for the
WG method, which has been proved in \cite{Wang2014a}.
\begin{lemma}\label{discrete-poincare}
The discrete Poincar\'{e}-type inequality holds true on $V_h$, i.e.
\begin{eqnarray*}
\|v_h\|\lesssim \trb{v_h},\quad\forall v_h\in V_h.
\end{eqnarray*}
\end{lemma}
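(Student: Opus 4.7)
My plan is to prove the discrete Poincar\'e inequality $\|v_h\|\lesssim\trb{v_h}$ by an Aubin--Nitsche style duality argument, rather than by directly applying the conforming interpolation $\Pi_h$ of Section~3 (which would introduce an unavoidable factor $h^{-\eps/2}$ through Lemma~\ref{norm-equi}). For a given $v_h=(v_0,v_b)\in V_h$, I would introduce the auxiliary Poisson problem $-\Delta w=v_0$ in $\Omega$ with homogeneous Dirichlet boundary data, so that under $H^2$-regularity on $\Omega$ one has $\|w\|_2\lesssim\|v_0\|$.

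Integrating by parts on each element, and exploiting that $v_b$ is single-valued on interior edges with $v_b=0$ on $\partial\Omega$, while $\nabla w\in[H^1(\Omega)]^d$ is continuous across interior edges, yields
\begin{equation*}
\|v_0\|^2=(v_0,-\Delta w)=\sumT(\nabla v_0,\nabla w)_T-\sumT\langle v_0-v_b,\nabla w\cdot\bn\rangle_{\partial T}.
\end{equation*}
Since $\nabla v_0\in[P_{k-1}(T)]^d$, the inner product $(\nabla v_0,\nabla w)_T$ is unchanged by replacing $\nabla w$ with $\dQ_h\nabla w$. Testing the definition (\ref{def-wgradient}) of the weak gradient with $\bq=\dQ_h\nabla w$ and rearranging then leads to the identity
\begin{equation*}
\|v_0\|^2=\sumT(\nabla_w v_h,\dQ_h\nabla w)_T+\sumT\langle v_0-v_b,(\dQ_h-I)\nabla w\cdot\bn\rangle_{\partial T}.
\end{equation*}

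The first sum is immediately controlled by $\|\nabla_w v_h\|\,\|\nabla w\|\le\trb{v_h}\|w\|_1\lesssim\trb{v_h}\|v_0\|$. For the boundary sum I would split $v_0-v_b=(v_0-Q_bv_0)+(Q_bv_0-v_b)$. The $(Q_bv_0-v_b)$ piece is handled by Cauchy--Schwarz weighted against the stabilization $h_T^{-1+\eps}$ inside $\trb{\cdot}$, combined with the standard projection estimate $\|(\dQ_h-I)\nabla w\|_{\partial T}\lesssim h_T^{1/2}\|w\|_{2,T}$, giving a bound of order $h^{1-\eps/2}\trb{v_h}\|v_0\|$. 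The $(v_0-Q_bv_0)$ piece exploits the orthogonality $\langle v_0-Q_bv_0,q\rangle_e=0$ for all $q\in P_{k-1}(e)$: since $\dQ_h\nabla w\cdot\bn|_e\in P_{k-1}(e)$, this contribution reduces to $-\sumT\langle v_0-Q_bv_0,(I-Q_b)(\nabla w\cdot\bn)\rangle_{\partial T}$, where the second factor again satisfies an edge projection estimate of order $h_T^{1/2}\|w\|_{2,T}$.

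The principal obstacle will be closing the first factor in the $(v_0-Q_bv_0)$ piece. I would address it by a reference-element scaling argument yielding $\|v_0-Q_bv_0\|_{\partial T}\lesssim h_T^{1/2}\|\nabla v_0\|_T$, and then invoking the inverse identity $\|\nabla v_0\|_T\lesssim\|\nabla_w v_h\|_T+h_T^{-1/2}\|v_0-v_b\|_{\partial T}$, obtained by testing the weak-gradient definition against $\nabla v_0$ and using a polynomial inverse trace inequality, with a short bootstrap to absorb the polynomial trace term back into the left-hand side. Assembling everything produces $\|v_0\|^2\lesssim(1+h^{1-\eps/2})\trb{v_h}\|v_0\|$; since $0\le\eps<1$ the prefactor stays bounded uniformly as $h\to 0$, and dividing by $\|v_0\|$ finishes the proof.
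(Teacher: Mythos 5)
The paper offers no proof of this lemma to compare against: it is quoted directly from \cite{Wang2014a}. So the question is only whether your self-contained duality argument stands, and its skeleton does. I verified both of your identities — the elementwise integration by parts with the flux cancellation $\sumT\langle v_b,\nabla w\cdot\bn\rangle_{\partial T}=0$ (which uses only single-valuedness of $v_b$ and $v_b=0$ on $\partial\Omega$, not the boundary condition on $w$), and the passage to $\dQ_h\nabla w$ via (\ref{def-wgradient}) — as well as the $h^{1-\eps/2}\trb{v_h}\|v_0\|$ bound for the $(Q_bv_0-v_b)$ piece and the orthogonality reduction of the $(v_0-Q_bv_0)$ piece. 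Your route also buys something the citation does not obviously provide: \cite{Wang2014a} treats the standard stabilizer ($\eps=0$), and since $h_T^{-1+\eps}\le h_T^{-1}$ the $\eps=0$ inequality does not formally imply the $\eps>0$ one (the $\trb{\cdot}$-norm weakens as $\eps$ grows); moreover your argument works on polytopal meshes, whereas the $\Pi_h$ machinery of Section 3 requires a triangulation, and your instinct that the $\Pi_h$ route would lose a factor $h^{-\eps/2}$ through Lemma \ref{norm-equi} is correct.

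Two steps need repair. First, you assume $H^2$-regularity of the dual problem, i.e.\ in effect convexity of $\Omega$; the lemma carries no such hypothesis, and the paper's third experiment is on the nonconvex L-shaped domain, where $\|w\|_2\lesssim\|v_0\|$ fails. The fix is standard and cheap: nothing in your computation uses $w=0$ on $\partial\Omega$, so extend $v_0$ by zero to a ball $B\supset\Omega$, solve $-\Delta w=\tilde v_0$ in $B$ (where $\|w\|_{2,B}\lesssim\|\tilde v_0\|_B=\|v_0\|$ always holds), and run the argument with $\nabla w|_\Omega$. Second, the assembly through your inverse identity is circular as written: $\|\nabla v_0\|_T\lesssim\|\nabla_w v_h\|_T+h_T^{-1/2}\|v_0-v_b\|_{\partial T}$ is true, but $\trb{v_h}$ controls only $h_T^{-1+\eps}\|Q_bv_0-v_b\|_{\partial T}^2$, and bridging the gap with $\|v_0-Q_bv_0\|_{\partial T}\lesssim h_T^{1/2}\|\nabla v_0\|_T$ puts $C\|\nabla v_0\|_T$ back on the right with a constant you cannot force below $1$, so it cannot be absorbed. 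The one-line repair is the same orthogonality you already use elsewhere: since $\nabla v_0\cdot\bn|_e\in P_{k-1}(e)$, you may replace $v_0$ by $Q_bv_0$ in the boundary pairing when deriving the identity, obtaining $\|\nabla v_0\|_T\lesssim\|\nabla_w v_h\|_T+h_T^{-1/2}\|Q_bv_0-v_b\|_{\partial T}$ and hence $\bigl(\sumT\|\nabla v_0\|_T^2\bigr)^{1/2}\lesssim h^{-\eps/2}\trb{v_h}$; equivalently, simply cite Lemma \ref{norm-equi} at this point, because — unlike in the leading term — the factor $h^{-\eps/2}$ is harmless here: it is multiplied by $h$, giving $h^{1-\eps/2}\trb{v_h}\|v_0\|$, which matches your other boundary piece and stays bounded for $0\le\eps<1$. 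With these two repairs your proof closes.
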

%------------------------------------------------------------------------------
From Theorem \ref{Error_Theorem_Eigenpair}, suppose the
eigenfunction $u$ is smooth enough and we have the following
estimate immediately,
\begin{eqnarray*}
&&h^{2k}\lesssim \lambda-\lambda_h \lesssim h^{2k-2\eps}.
\end{eqnarray*}
For simplicity, here and hereafter, we assume the concerned
eigenvalues are simple. In order to estimate $|\lambda-\tilde\lambda_h|$,
we just need to estimate $|\lambda_h-\tilde\lambda_h|$.
\begin{lemma}\label{lemma1}
Suppose $(\tilde\lambda_h,\tilde u_h)$ is calculated by Algorithm
\ref{twogrid-alg} and $(\lambda_h, u_h)$ satisfies (\ref{WG-scheme}).
Then the following estimate holds
\begin{eqnarray}\label{Difference_twogrid_direct_eigenfun}
|\tilde\lambda_h-\lambda_h|\lesssim \trb{\tilde u_h- u_h}^2.
\end{eqnarray}
\end{lemma}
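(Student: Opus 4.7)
The plan is to exploit the standard Rayleigh quotient identity. Setting $w = \tilde u_h - u_h$ and using the fact that $u_h$ satisfies $a_s(u_h,v_h) = \lambda_h b_w(u_h,v_h)$ for all $v_h \in V_h$, I would expand
\begin{eqnarray*}
a_s(\tilde u_h,\tilde u_h) &=& a_s(u_h,u_h) + 2a_s(u_h,w) + a_s(w,w) \\
&=& \lambda_h b_w(u_h,u_h) + 2\lambda_h b_w(u_h,w) + a_s(w,w) \\
&=& \lambda_h b_w(\tilde u_h,\tilde u_h) - \lambda_h b_w(w,w) + a_s(w,w).
\end{eqnarray*}
Dividing by $b_w(\tilde u_h,\tilde u_h)$ then yields the key identity
\begin{eqnarray*}
\tilde\lambda_h - \lambda_h = \frac{a_s(w,w) - \lambda_h b_w(w,w)}{b_w(\tilde u_h,\tilde u_h)}.
\end{eqnarray*}

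Next I would bound the numerator. By definition $a_s(w,w) = \trb{w}^2$, and Lemma \ref{discrete-poincare} gives $b_w(w,w) = \|w_0\|^2 \le \|w\|^2 \lesssim \trb{w}^2$. Since $\lambda_h$ is uniformly bounded (it converges to the fixed exact eigenvalue $\lambda$ by Theorem \ref{Error_Theorem_Eigenpair}), the numerator satisfies $|a_s(w,w) - \lambda_h b_w(w,w)| \lesssim \trb{w}^2$.

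For the denominator I would argue that $b_w(\tilde u_h,\tilde u_h)$ is bounded away from zero. Since $\|u_h\|_b = 1$, the triangle inequality gives
\begin{eqnarray*}
\|\tilde u_h\|_b \ge \|u_h\|_b - \|w\|_b \ge 1 - C\,\trb{w},
\end{eqnarray*}
and for $h$ small enough (so that the two-grid approximation is accurate) this is bounded below by a positive constant, hence $b_w(\tilde u_h, \tilde u_h) \gtrsim 1$. Combining the numerator and denominator bounds produces the claimed estimate (\ref{Difference_twogrid_direct_eigenfun}).

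The only delicate point will be justifying the lower bound on $b_w(\tilde u_h,\tilde u_h)$; this presumably is part of the implicit assumption that the two-grid procedure is being analyzed in an asymptotic regime where $\trb{\tilde u_h - u_h}$ is small, which is indeed what the subsequent convergence analysis is designed to establish. All the remaining computations are routine algebraic manipulations and an application of the discrete Poincar\'e inequality.
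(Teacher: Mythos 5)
Your proof is correct and follows essentially the same route as the paper: the identity $(\tilde\lambda_h-\lambda_h)\,b_w(\tilde u_h,\tilde u_h)=a_s(\tilde u_h-u_h,\tilde u_h-u_h)-\lambda_h b_w(\tilde u_h-u_h,\tilde u_h-u_h)$ obtained from the Rayleigh quotient and the eigenvalue equation for $u_h$, followed by the discrete Poincar\'e inequality. You are in fact slightly more careful than the paper, which leaves the lower bound on $b_w(\tilde u_h,\tilde u_h)$ implicit.
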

\begin{proof}
From (\ref{WG-scheme}) and Lemma \ref{discrete-poincare}, we have
\begin{eqnarray*}
&&(\tilde\lambda_h-\lambda_h) b_w(\tilde u_h,\tilde u_h)
=a_s(\tilde u_h,\tilde u_h)-\lambda_h b_w(\tilde u_h,\tilde u_h)\\
&=&a_s(\tilde u_h-u_h,\tilde u_h-u_h)+2a_s(u_h,\tilde u_h)-a_s(u_h,u_h)\\
&&-\lambda_hb_w(\tilde u_h-u_h,\tilde u_h-u_h)
-2\lambda_hb_w(u_h,\tilde u_h) +\lambda_hb_w(u_h,u_h)\\
&=&a_s(\tilde u_h-u_h,\tilde u_h-u_h)-\lambda_h b_w(\tilde u_h-u_h,
\tilde u_h-u_h)\\
&\lesssim& \trb{\tilde u_h- u_h}^2,
\end{eqnarray*}
which completes the proof.
\end{proof}

\begin{lemma}\label{lemma2}
Under the conditions of Lemma \ref{lemma1}, the following estimate holds true
\begin{eqnarray}\label{Difference_twogrid_direct_eigenfunction}
\trb{\tilde u_h-u_h}\lesssim H^{2k-2\eps}+{\color{black}H^{k+\gamma-\eps}}, \quad when \ h<H.
\end{eqnarray}
Here and hereafter $\gamma$ is defined as follows
\begin{equation}\label{Definition_Gamma}
\gamma=\left\{
\begin{array}{rl}
1,&\ {\rm when\ the\ solution\ of\ dual\ problem\ (\ref{Dual_Problem})\ satisfies}\\
&\ \ \ \psi\in H^2(\Omega)\ {\rm or}\ k=1,\\
2-\frac{\varepsilon}{2},&\ {\rm when\ the\ solution\ of\ dual\ problem\ (\ref{Dual_Problem})\
satisfies}\\
&\ \ \ \psi\in H^3(\Omega)\ {\rm and}\ k>1.
\end{array}
\right.
\end{equation}
\end{lemma}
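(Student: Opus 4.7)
The plan is to test the error equation satisfied by $\tilde u_h - u_h$ against itself and to estimate the resulting right-hand side using the eigenvalue bound from Theorem 2.1 together with the negative-norm estimates developed in Section 3.

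First, I would start from $\trb{\tilde u_h - u_h}^2 = a_s(\tilde u_h - u_h, \tilde u_h - u_h)$ and substitute the defining relations for $\tilde u_h$ (Step 2 of Algorithm 4.1) and $u_h$ (equation (\ref{WG-scheme})) with $v_h = \tilde u_h - u_h$. Adding and subtracting $\lambda_H b_w(u_h, \tilde u_h - u_h)$ yields
\[
\trb{\tilde u_h - u_h}^2 = \lambda_H\, b_w(u_H - u_h,\, \tilde u_h - u_h) + (\lambda_H - \lambda_h)\, b_w(u_h,\, \tilde u_h - u_h).
\]
This reduces the task to bounding the two terms on the right.

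The second term is the easier one. By Theorem \ref{Error_Theorem_Eigenpair}, $|\lambda_H - \lambda_h| \le |\lambda - \lambda_H| + |\lambda - \lambda_h| \lesssim H^{2k-2\eps}$, while the normalization $\|u_h\|_b = 1$ combined with the discrete Poincar\'e inequality (Lemma \ref{discrete-poincare}) gives $|b_w(u_h,\tilde u_h - u_h)| \le \|u_h\|_b \|\tilde u_h - u_h\|_b \lesssim \trb{\tilde u_h - u_h}$. This contributes the summand $H^{2k-2\eps}\trb{\tilde u_h - u_h}$.

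For the cross term $b_w(u_H - u_h, \tilde u_h - u_h)$, I would insert the exact eigenfunction $u$ and split $u_H - u_h = (u_H - u) + (u - u_h)$, then apply duality with a case distinction based on the regularity of the dual problem (\ref{Dual_Problem}). In the low-regularity case ($\psi \in H^2(\Omega)$, or $k = 1$), a direct Cauchy--Schwarz in $\|\cdot\|_b$ together with the $L^2$ eigenfunction estimates $\|u-u_H\|_b \lesssim H^{k+1-\eps}$ and $\|u-u_h\|_b \lesssim h^{k+1-\eps}\le H^{k+1-\eps}$ from Theorem \ref{Error_Theorem_Eigenpair} delivers the factor $H^{k+1-\eps}$, which matches $\gamma = 1$. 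In the higher-regularity case ($\psi \in H^3(\Omega)$, $k > 1$), the sharper negative-norm estimate of Theorem \ref{Error_Theorem_Dualnorm} gives $\|u - u_H\|_{-V} + \|u - u_h\|_{-V} \lesssim H^{k+2-\eps}$, which I would pair with Lemma \ref{norm-equi} to bound $\|\tilde u_h - u_h\|_V \lesssim h^{-\eps/2}\trb{\tilde u_h - u_h}$; together they produce the rate $H^{k+2-3\eps/2}$, matching $\gamma = 2-\eps/2$.

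Dividing through by $\trb{\tilde u_h - u_h}$ then yields the stated bound. The main obstacle I anticipate is the careful tracking of the stabilization exponent $\eps$ in the higher-regularity case: the passage from $\trb{\cdot}$ to $\|\cdot\|_V$ via Lemma \ref{norm-equi} costs a factor $h^{-\eps/2}$, and this mesh-dependent factor must be reconciled with the $H$-only form of the final estimate under the hypothesis $h < H$ (essentially amounting to an unavoidable loss of $H^{-\eps/2}$ relative to the pure negative-norm rate $H^{k+2-\eps}$).
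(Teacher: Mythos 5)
Your proposal follows essentially the same route as the paper: the identical add-and-subtract decomposition of $a_s(\tilde u_h-u_h,v_h)$ into $\lambda_H b_w(u_H-u,v_h)+\lambda_H b_w(u-u_h,v_h)+(\lambda_H-\lambda)b_w(u_h,v_h)+(\lambda-\lambda_h)b_w(u_h,v_h)$, the same case split on the regularity of the dual problem, and the same combination of Theorem \ref{Error_Theorem_Eigenpair}, Theorem \ref{Error_Theorem_Dualnorm}, Lemma \ref{norm-equi} and the discrete Poincar\'e inequality, with the only cosmetic difference that you take $v_h=\tilde u_h-u_h$ at the outset rather than at the end. The $h^{-\eps/2}$ bookkeeping issue you flag in the high-regularity case is present in the paper's own argument in exactly the same form, so it is not a gap relative to the paper's proof.
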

\begin{proof}
For all $v_h\in V_h$, from equation (\ref{WG-scheme}), Theorems \ref{Error_Theorem_Eigenpair},
and \ref{Error_Theorem_Dualnorm} we can obtain
\begin{eqnarray*}\label{Inequality}
&&a_s(\tilde u_h-u_h,v_h)=a_s(\tilde u_h,v_h)-a_s(u_h,v_h)\nonumber\\
&=&\lambda_Hb_w(u_H,v_h)-\lambda_h b_w(u_h,v_h)\nonumber\\
&=&\lambda_Hb_w(u_H,v_h)-\lambda_Hb_w(u_h,v_h)+\lambda_Hb_w(u_h,v_h)
-\lambda_hb_w(u_h,v_h)\nonumber\\
&=&\lambda_Hb_w(u_H-u_h,v_h)+(\lambda_H-\lambda_h)b_w(u_h,v_h)\nonumber\\
&=&\lambda_Hb_w(u_H-u,v_h)+\lambda_Hb_w(u-u_h,v_h)\nonumber\\
&&+(\lambda_H-\lambda)b_w(u_h,v_h)+(\lambda-\lambda_h)b_w(u_h,v_h).
\end{eqnarray*}
If $k=1$ or the solution of the dual problem (\ref{Dual_Problem}) has the regularity
$\psi\in H^2(\Omega)$, we have
\begin{eqnarray}\label{Inequality_1_twogrid}
a_s(\tilde u_h-u_h,v_h)
&\lesssim& (\|u-u_H\|+\|u-u_h\|)\|v_h\|_b
+(|\lambda_H-\lambda|+|\lambda_h-\lambda|)\|v_h\|_b\nonumber\\
&\lesssim& (H^{{\color{black}k+1}-\eps}+h^{{\color{black}k+1}-\eps})\|v_h\|_b
+(H^{2k-2\eps}+h^{2k-2\eps})\|v_h\|_b\nonumber\\
&\lesssim& (H^{{\color{black}k+1}-\eps}+H^{2k-2\eps})\trb{v_h}.
\end{eqnarray}
If $k>1$ and the solution of the dual problem (\ref{Dual_Problem}) has the regularity
$\psi\in H^3(\Omega)$, the following estimates hold
\begin{eqnarray}\label{Inequality_2_twogrid}
a_s(\tilde u_h-u_h,v_h)
&\lesssim& (\|u-u_H\|_{-V}+\|u-u_h\|_{-V})\|v_h\|_V
+(|\lambda_H-\lambda|+|\lambda_h-\lambda|)\|v_h\|_b\nonumber\\
&\lesssim& (H^{{\color{black}k+2}-\eps}+h^{{\color{black}k+2}-\eps})\|v_h\|_V
+(H^{2k-2\eps}+h^{2k-2\eps})\|v_h\|\nonumber\\
&\lesssim& (H^{{\color{black}k+2}-\frac32\eps}+H^{2k-2\eps})\trb{v_h}.
\end{eqnarray}
From (\ref{Inequality_1_twogrid})-(\ref{Inequality_2_twogrid}) and taking $v_h=\tilde u_h-u_h$,
we can obtain the desired result (\ref{Difference_twogrid_direct_eigenfunction})
and the proof is completed.
\end{proof}

From Lemmas \ref{lemma1} and \ref{lemma2}, the convergence
of $|\lambda_h-\tilde \lambda_h|$ follows immediately.
\begin{lemma}\label{Two_Grid_Eigenvalue_Lemma}
Suppose $(\tilde\lambda_h,\tilde u_h)$ is calculated by Algorithm
\ref{twogrid-alg} and $(\lambda_h, u_h)$ satisfies (\ref{WG-scheme}).
Then the following estimate holds
\begin{eqnarray}\label{Difference_twogrid_solving_eigenvalue}
|\tilde\lambda_h-\lambda_h|\lesssim H^{4k-4\eps}+H^{{\color{black}2k+2\gamma-2\eps}}.
\end{eqnarray}
\end{lemma}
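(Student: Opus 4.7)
The plan is to derive the estimate by a mechanical combination of Lemma \ref{lemma1} and Lemma \ref{lemma2}; no further ingredients are needed. Lemma \ref{lemma1} reduces the eigenvalue error to the square of the energy-norm eigenfunction error, and Lemma \ref{lemma2} already supplies the explicit rate of that error in terms of the coarse mesh size $H$ (with the case distinction in the definition \eqref{Definition_Gamma} of $\gamma$ absorbed into the statement).

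First I would invoke Lemma \ref{lemma1} to get
\[
|\tilde\lambda_h - \lambda_h| \lesssim \trb{\tilde u_h - u_h}^2,
\]
then substitute the bound $\trb{\tilde u_h - u_h} \lesssim H^{2k-2\eps} + H^{k+\gamma-\eps}$ from Lemma \ref{lemma2}, and finally square. The expansion produces the two principal terms $H^{4k-4\eps}$ and $H^{2k+2\gamma-2\eps}$, plus a cross term of order $H^{3k+\gamma-3\eps}$.

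The only point to verify is that this cross term is absorbed by the two principal ones. I would handle it with the elementary inequality $(a+b)^2 \le 2(a^2 + b^2)$, applied with $a = H^{2k-2\eps}$ and $b = H^{k+\gamma-\eps}$. This avoids having to determine which of the two rates dominates (which depends on the sign of $k - \gamma - \eps$), and immediately yields the claimed bound
\[
|\tilde\lambda_h - \lambda_h| \lesssim H^{4k-4\eps} + H^{2k+2\gamma-2\eps}.
\]
Since the argument uses only the two previously established lemmas plus one universal scalar inequality, there is no real obstacle here; the step is essentially bookkeeping, and consequently the lemma is stated as an immediate corollary of Lemmas \ref{lemma1} and \ref{lemma2}.
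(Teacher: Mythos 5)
Your proposal is correct and matches the paper exactly: the paper states this lemma as an immediate consequence of Lemmas \ref{lemma1} and \ref{lemma2}, obtained by squaring the eigenfunction bound, which is precisely your argument. The handling of the cross term via $(a+b)^2\le 2(a^2+b^2)$ is the right (and only) bookkeeping step needed.
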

With Lemmas \ref{lemma2} and \ref{Two_Grid_Eigenvalue_Lemma},
we arrive at the following convergence theorem.
\begin{theorem}\label{Error_Eigen_Two_Grid_Theorem}
Suppose $(\tilde\lambda_h,\tilde u_h)$ is calculated by
Algorithm \ref{twogrid-alg}, $h<H$ and the exact eigenfunctions of (\ref{problem-eq})
have $H^{k+1}(\Omega)$-regularity.
Then there exists an exact eigenpair $(\lambda, u)$ such that the following estimates hold true
\begin{eqnarray}
\trb{Q_hu-\tilde u_h} &\lesssim& H^{\bar k}+h^{k-\eps},\label{Error_Eigenfunction_Two_Grid}\\
|\tilde\lambda_h-\lambda|&\lesssim& H^{2\bar k}+h^{2k-2\eps},\label{Error_Eigen_Two_Grid}
\end{eqnarray}
where $\bar k=\min\{2k-2\eps, {\color{black}k+\gamma-\eps}\}$.
\end{theorem}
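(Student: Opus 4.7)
The argument is a pair of triangle inequalities in which the ``two-grid vs.\ direct fine-grid'' discrepancy has already been controlled by Lemmas \ref{lemma2} and \ref{Two_Grid_Eigenvalue_Lemma}, while the ``direct fine-grid vs.\ exact'' part is delivered by Theorem \ref{Error_Theorem_Eigenpair}. No new analytic estimate is required; everything is book-keeping plus the norm equivalence of Lemma \ref{norm-equi}.

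First I would fix the direct WG eigenpair $(\lambda_h,u_h)$ satisfying (\ref{WG-scheme}) and invoke Theorem \ref{Error_Theorem_Eigenpair} at the fine level $h$ to produce an exact eigenpair $(\lambda,u)$ with $\|u-u_h\|_V\lesssim h^{k-\eps}\|u\|_{k+1}$ and $|\lambda-\lambda_h|\lesssim h^{2k-2\eps}\|u\|_{k+1}$. For the eigenfunction estimate I would split
\begin{eqnarray*}
\trb{Q_h u-\tilde u_h}\;\le\;\trb{Q_h u-u_h}\;+\;\trb{u_h-\tilde u_h}.
\end{eqnarray*}
The second summand is exactly Lemma \ref{lemma2}, whose bound $H^{2k-2\eps}+H^{k+\gamma-\eps}$ is $\lesssim H^{\bar k}$ since $H<1$. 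For the first summand I would observe that $Q_h u-u_h\in V_h$, apply the lower half $\trb w\le C_1^{-1}\|w\|_V$ of Lemma \ref{norm-equi}, then use $\|Q_h u-u_h\|_V\le \|Q_h u-u\|_V+\|u-u_h\|_V$ together with the standard $L^2$-projection estimate $\|Q_h u-u\|_V\lesssim h^k\|u\|_{k+1}$ and (\ref{eig-est2}) to arrive at $\trb{Q_h u-u_h}\lesssim h^{k-\eps}\|u\|_{k+1}$. This produces (\ref{Error_Eigenfunction_Two_Grid}).

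For the eigenvalue, the same triangle split $|\tilde\lambda_h-\lambda|\le|\tilde\lambda_h-\lambda_h|+|\lambda_h-\lambda|$, combined with Lemma \ref{Two_Grid_Eigenvalue_Lemma} for the first summand and (\ref{eig-est1}) for the second, gives an upper bound $H^{4k-4\eps}+H^{2k+2\gamma-2\eps}+h^{2k-2\eps}$. Since $H<1$, the two $H$-terms absorb into $H^{\min\{4k-4\eps,\,2k+2\gamma-2\eps\}}=H^{2\bar k}$, which is exactly (\ref{Error_Eigen_Two_Grid}).

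\textbf{Expected obstacle.} There is no genuinely hard step left: the nontrivial ingredients, namely the negative-norm estimate of Theorem \ref{Error_Theorem_Dualnorm} and the conforming-interpolation bounds of Lemmas \ref{technique-1}--\ref{technique-3}, are already absorbed into Lemmas \ref{lemma2} and \ref{Two_Grid_Eigenvalue_Lemma}. The one point that requires care is the \emph{direction} in which Lemma \ref{norm-equi} is used when converting to $\trb\cdot$: one must apply the lower inequality $\trb w\le C_1^{-1}\|w\|_V$; the reverse inequality would introduce a spurious factor $h^{-\eps/2}$ and destroy the fine-grid convergence rate $h^{k-\eps}$.
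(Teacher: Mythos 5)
Your proposal is correct and follows exactly the route the paper intends: the paper gives no explicit proof, stating only that the theorem follows from Lemmas \ref{lemma2} and \ref{Two_Grid_Eigenvalue_Lemma}, and your triangle-inequality book-keeping (combined with Theorem \ref{Error_Theorem_Eigenpair} for the direct fine-grid errors and the correct direction of Lemma \ref{norm-equi}) is precisely that omitted argument.
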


From Theorem \ref{Error_Theorem_Eigenpair} and Lemma
\ref{Two_Grid_Eigenvalue_Lemma}, we can get the following lower
bound estimate.
\begin{theorem}\label{Lower_Bound_Theorem_TwoGrid}
Suppose the conditions of Theorem \ref{Error_Eigen_Two_Grid_Theorem} hold and
let $\bar k=\min\{2k-2\eps, k+\gamma-\eps\}$ and $\delta$ be a positive number.
If $H^{2\bar k}\le C h^{2k+\delta}$, then we have
\begin{eqnarray*}
\lambda-\tilde \lambda_h\ge 0,
\end{eqnarray*}
when $H$ and $h$ are sufficiently small.
\end{theorem}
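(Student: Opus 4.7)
The plan is to write
\begin{eqnarray*}
\lambda-\tilde\lambda_h = (\lambda-\lambda_h) + (\lambda_h-\tilde\lambda_h),
\end{eqnarray*}
and show that the first (positive) term dominates the (signed) second term, so the sum is non-negative once $h$ is sufficiently small. The key asymmetry that makes this possible is that the lower bound for the standard WG eigenvalue in Theorem \ref{Error_Theorem_Eigenpair} is sharp at the rate $h^{2k}$, while the perturbation introduced by the two-grid post-processing can be made to decay at the strictly faster rate $h^{2k+\delta}$ thanks to the scaling condition on $H$.

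First, I would invoke the lower bound inequality in (\ref{eig-est1}): there exists a constant $c_1>0$ such that
\begin{eqnarray*}
\lambda-\lambda_h \;\ge\; c_1 h^{2k}\|u\|_{k+1},
\end{eqnarray*}
for all sufficiently small $h$. This is precisely the asymptotic-lower-bound property of the WG method that we want to preserve.

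Next, I would apply Lemma \ref{Two_Grid_Eigenvalue_Lemma}, which gives
\begin{eqnarray*}
|\lambda_h - \tilde\lambda_h| \;\lesssim\; H^{4k-4\eps} + H^{2k+2\gamma-2\eps} \;=\; H^{2\bar k},
\end{eqnarray*}
since $2\bar k = \min\{4k-4\eps, 2k+2\gamma-2\eps\}$ by the definition of $\bar k$. Substituting the hypothesis $H^{2\bar k}\le C h^{2k+\delta}$ yields a constant $c_2>0$ with
\begin{eqnarray*}
|\lambda_h - \tilde\lambda_h| \;\le\; c_2 h^{2k+\delta} \;=\; c_2 h^{\delta}\cdot h^{2k}.
\end{eqnarray*}

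Finally, since $\delta>0$, we can choose $h$ so small that $c_2 h^{\delta} \le \tfrac{1}{2} c_1 \|u\|_{k+1}$. Combining the two estimates gives
\begin{eqnarray*}
\lambda-\tilde\lambda_h \;\ge\; (\lambda-\lambda_h) - |\lambda_h-\tilde\lambda_h| \;\ge\; \bigl(c_1\|u\|_{k+1} - c_2 h^{\delta}\bigr) h^{2k} \;\ge\; \tfrac{1}{2} c_1 \|u\|_{k+1} h^{2k} \;\ge\; 0,
\end{eqnarray*}
which is the claim. There is no real obstacle here beyond being careful that the constant in the lower-bound part of (\ref{eig-est1}) is independent of the constants produced by Lemma \ref{Two_Grid_Eigenvalue_Lemma}, and that $\delta>0$ strictly (if $\delta=0$ the sign of the difference could not be guaranteed from the asymptotic rates alone).
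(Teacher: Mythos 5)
Your proposal is correct and follows essentially the same route as the paper's proof: split $\lambda-\tilde\lambda_h$ into $(\lambda-\lambda_h)+(\lambda_h-\tilde\lambda_h)$, use the sharp lower bound $\lambda-\lambda_h\gtrsim h^{2k}$ from (\ref{eig-est1}), bound $|\lambda_h-\tilde\lambda_h|\lesssim H^{2\bar k}\le Ch^{2k+\delta}$ via Lemma \ref{Two_Grid_Eigenvalue_Lemma}, and let the extra factor $h^{\delta}$ make the perturbation negligible for small $h$. Your added care about the independence of the constants and the strictness of $\delta>0$ is a reasonable refinement of the same argument.
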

%--------------------------------------------------------------------------------------------
\begin{proof}
From Theorem \ref{Error_Theorem_Eigenpair} we have
\begin{eqnarray*}
\lambda-\lambda_h\ge Ch^{2k}.
\end{eqnarray*}
According to Lemma \ref{Two_Grid_Eigenvalue_Lemma}, the following estimates hold
\begin{eqnarray*}
|\tilde\lambda_h-\lambda_h|\le CH^{2\bar k}\le  Ch^{2k+\delta}.
\end{eqnarray*}
When $h$ is sufficiently small, it follows that
\begin{eqnarray*}
\lambda-\tilde\lambda_h
&=&\lambda-\lambda_h+\lambda_h-\tilde\lambda_h
\ge\lambda-\lambda_h-|\tilde\lambda_h-\lambda_h|\\
&\ge&Ch^{2k}-Ch^{2k+\delta}\ge 0,
\end{eqnarray*}
which completes the proof.
\end{proof}

\section{A two-space scheme}

In this section, we shall give a two-space WG scheme for problem
(\ref{problem-eq}), where different polynomial spaces are employed
on the same mesh.

Denote the finite element spaces
\begin{eqnarray*}
V_h^1=\{v=(v_0,v_b):v_0|_T\in P_{k_1}(T), v_b|_e\in P_{k_1-1}(e), \forall
T\in\T_h, e\in\E_h, \text{ and }v_b=0 \text{ on }\partial\Omega\},
\end{eqnarray*}
\begin{eqnarray*}
V_h^2=\{v=(v_0,v_b):v_0|_T\in P_{k_2}(T), v_b|_e\in P_{k_2-1}(e), \forall
T\in\T_h, e\in\E_h, \text{ and }v_b=0 \text{ on }\partial\Omega\}.
\end{eqnarray*}

Denote $(\lambda^2_h,u^2_h)$ the numerical solution of the
standard WG scheme, which satisfies
\begin{eqnarray}\label{tg-est2}
a_s(u^2_h,v_h)=\lambda^2_hb_w(u^2_h,v_h),\quad\forall v_h\in V^2_h.
\end{eqnarray}

In the two-space method, the spaces $V_h^1$ and $V_h^2$ are defined on the same mesh, with
different degrees of polynomials.

\begin{algorithm1}\label{twospace-alg}

Step 1: Solve the eigenvalue problem in the space $V_h^1$:

Find $(\lambda_h^1,u_h^1)\in \Real\times V_h^1$ such that
\begin{eqnarray*}
a_s(u_h^1,v_h)=\lambda_h^1b_w(u_h^1,v_h),\quad\forall v_h\in V_h^1.
\end{eqnarray*}

Step 2: Solve one single linear problem in the space $V_h^2$:

Find $\hat u_h\in V_h^2$ such that
\begin{eqnarray*}
a_s(\hat u_h,v_h)=\lambda_h^1b_w(u_h^1,v_h),\quad\forall v_h\in V_h^2.
\end{eqnarray*}

Step 3: Calculate the Rayleigh quotient
\begin{eqnarray*}
\hat \lambda_h=\dfrac{a_s(\hat u_h,\hat u_h)}{b_w(\hat u_h,\hat u_h)}.
\end{eqnarray*}
Finally, we obtain the eigenpair approximation $(\hat \lambda_h,\hat u_h)$.
\end{algorithm1}

The proof is similar to the two-grid algorithm, and we just need to
interpret Lemmas \ref{lemma1} and \ref{lemma2} into the two-space case.
\begin{lemma}\label{lemma4}
Suppose $(\hat\lambda_h,\hat u_h)$ is calculated by Algorithm
\ref{twospace-alg} and $(\lambda_h^2, u^2_h)$ satisfies (\ref{tg-est2}).
Then the following estimate holds
\begin{eqnarray}
|\hat\lambda_h-\lambda_h^2|\lesssim \trb{\hat u_h- u^2_h}^2.
\end{eqnarray}
\end{lemma}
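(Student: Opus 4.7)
The plan is to mimic the proof of Lemma \ref{lemma1} line by line, with $\tilde u_h$ replaced by $\hat u_h$ and $u_h$ replaced by $u_h^2$. The key algebraic identity I would write first is
\begin{eqnarray*}
(\hat\lambda_h-\lambda_h^2)\,b_w(\hat u_h,\hat u_h)
&=& a_s(\hat u_h,\hat u_h)-\lambda_h^2\,b_w(\hat u_h,\hat u_h),
\end{eqnarray*}
which is immediate from Step 3 of Algorithm \ref{twospace-alg} (the definition of $\hat\lambda_h$ as a Rayleigh quotient), provided that $b_w(\hat u_h,\hat u_h)\ne 0$. I will come back to this nonvanishing issue at the end.

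Next I would rewrite the right-hand side by inserting $\pm a_s(u_h^2,\cdot)$ and $\pm \lambda_h^2 b_w(u_h^2,\cdot)$ exactly as in the proof of Lemma \ref{lemma1}:
\begin{eqnarray*}
a_s(\hat u_h,\hat u_h)-\lambda_h^2\,b_w(\hat u_h,\hat u_h)
&=& a_s(\hat u_h-u_h^2,\hat u_h-u_h^2)-\lambda_h^2\,b_w(\hat u_h-u_h^2,\hat u_h-u_h^2)\\
&& + 2\bigl[a_s(u_h^2,\hat u_h)-\lambda_h^2 b_w(u_h^2,\hat u_h)\bigr]\\
&& - \bigl[a_s(u_h^2,u_h^2)-\lambda_h^2 b_w(u_h^2,u_h^2)\bigr].
\end{eqnarray*}
Because $\hat u_h\in V_h^2$ and $u_h^2$ satisfies (\ref{tg-est2}) for every test function in $V_h^2$, the two bracketed terms vanish. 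This is the crux: the cancellation relies precisely on the two-space algorithm placing $\hat u_h$ in the same space $V_h^2$ where $u_h^2$ is the Galerkin eigenfunction.

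Having reduced to $a_s(\hat u_h-u_h^2,\hat u_h-u_h^2)-\lambda_h^2\,b_w(\hat u_h-u_h^2,\hat u_h-u_h^2)$, I would control the first term by $\trb{\hat u_h-u_h^2}^2$ directly from the definition of the triple-bar norm, and the second by $\lambda_h^2\,\trb{\hat u_h-u_h^2}^2$ via the discrete Poincar\'e inequality in Lemma \ref{discrete-poincare}. Since $\lambda_h^2$ is a fixed approximate eigenvalue (bounded as $h\to 0$), this yields $a_s(\hat u_h,\hat u_h)-\lambda_h^2\,b_w(\hat u_h,\hat u_h)\lesssim \trb{\hat u_h-u_h^2}^2$.

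The main (mild) obstacle I anticipate is the bound from below of $b_w(\hat u_h,\hat u_h)$, which is needed to move it to the left-hand side without losing control of the constant. I would handle this by noting $\|u_h^2\|_b=1$ and invoking the triangle inequality together with the smallness of $\trb{\hat u_h-u_h^2}$ (which will be established in the analog of Lemma \ref{lemma2} for the two-space method, via Theorems \ref{Error_Theorem_Eigenpair} and \ref{Error_Theorem_Dualnorm} applied to both $V_h^1$ and $V_h^2$) and once more the discrete Poincar\'e inequality, so that $b_w(\hat u_h,\hat u_h)\ge \tfrac12$ for $h$ small enough. Dividing through then gives $|\hat\lambda_h-\lambda_h^2|\lesssim \trb{\hat u_h-u_h^2}^2$, as required.
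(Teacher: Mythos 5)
Your proposal is correct and follows essentially the same route as the paper, which simply states that Lemma \ref{lemma4} is obtained by transcribing the proof of Lemma \ref{lemma1} to the two-space setting: the Rayleigh-quotient identity, the insertion of cross terms that cancel by (\ref{tg-est2}) since $\hat u_h\in V_h^2$, and the final bound via the discrete Poincar\'e inequality are exactly the paper's argument. Your extra care in bounding $b_w(\hat u_h,\hat u_h)$ away from zero is a point the paper leaves implicit, but it does not change the substance of the proof.
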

%---------------------------------------------------------------------------------
\begin{lemma}\label{lemma3}
Suppose $(\hat\lambda_h,\hat u_h)$ is calculated by Algorithm \ref{twospace-alg},
$(\lambda^2_h, u^2_h)$ satisfies (\ref{tg-est2}) and $k_1<k_2$.
When the exact solution $u\in H^{k_2+1}(\Omega)$, the following estimate holds true
\begin{eqnarray}\label{Difference_twospace_direct_eigenfunction}
\trb{\hat u_h-u^2_h}\lesssim h^{2k_1-2\eps}+h^{{\color{black}k_1+\gamma-\eps}}.
\end{eqnarray}
\end{lemma}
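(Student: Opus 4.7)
The plan is to mirror the proof of Lemma \ref{lemma2}, substituting the coarse mesh $\T_H$ with the coarse polynomial space $V_h^1$ sitting inside $V_h^2$ on the same mesh $\T_h$. A structural observation that makes this work is that since $k_1<k_2$ we have $V_h^1\subset V_h^2$, so $u_h^1$ is a legitimate test function in $V_h^2$ and the two Galerkin identities can be combined.

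First I would use the defining equation of $\hat u_h$ from Step 2 of Algorithm \ref{twospace-alg} together with \eqref{tg-est2} to write, for any $v_h\in V_h^2$,
$$a_s(\hat u_h-u_h^2,v_h)=\lambda_h^1 b_w(u_h^1,v_h)-\lambda_h^2 b_w(u_h^2,v_h).$$
Then I would insert the exact eigenpair $(\lambda,u)$ and split the right-hand side into the same four pieces as in Lemma \ref{lemma2}:
$$\lambda_h^1 b_w(u_h^1-u,v_h)+\lambda_h^1 b_w(u-u_h^2,v_h)+(\lambda_h^1-\lambda)b_w(u_h^2,v_h)+(\lambda-\lambda_h^2)b_w(u_h^2,v_h).$$

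Next I would apply the a priori bounds from Theorems \ref{Error_Theorem_Eigenpair} and \ref{Error_Theorem_Dualnorm} to each piece, branching on the regularity of the dual problem. In the low-regularity case ($\psi\in H^2$ or $k_1=1$) the first two terms are estimated by $\|u-u_h^i\|_b\lesssim h^{k_i+1-\eps}$ with $\|v_h\|_b\lesssim \trb{v_h}$ via the discrete Poincar\'e inequality (Lemma \ref{discrete-poincare}), producing the contribution $h^{k_1+1-\eps}$. In the high-regularity case ($\psi\in H^3$ and $k_1>1$) I would instead invoke the sharper bound $\|u-u_h^i\|_{-V}\lesssim h^{k_i+2-\eps}$ and pay the factor $h^{-\eps/2}$ from Lemma \ref{norm-equi} when passing from $\|v_h\|_V$ to $\trb{v_h}$, producing $h^{k_1+2-\tfrac{3\eps}{2}}$. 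The eigenvalue errors $|\lambda-\lambda_h^i|$ contribute $h^{2k_i-2\eps}$ in either case. Because $k_1<k_2$, the $k_1$-level terms always dominate and the two cases assemble into precisely $h^{2k_1-2\eps}+h^{k_1+\gamma-\eps}$ with $\gamma$ as defined in \eqref{Definition_Gamma}.

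Finally I would take $v_h=\hat u_h-u_h^2\in V_h^2$ so that the left-hand side becomes $\trb{\hat u_h-u_h^2}^2$, divide through by $\trb{\hat u_h-u_h^2}$, and read off the claimed estimate \eqref{Difference_twospace_direct_eigenfunction}. The proof contains no essentially new difficulty beyond Lemma \ref{lemma2}; the main care needed is the bookkeeping that identifies the dominant $k_1$-terms and the correct choice between the $\|\cdot\|_b$ and $\|\cdot\|_{-V}$ error routes depending on which branch of $\gamma$ applies.
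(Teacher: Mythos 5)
Your proposal is correct and follows essentially the same route as the paper's own proof: the identical four-term decomposition of $a_s(\hat u_h-u_h^2,v_h)$ obtained by inserting the exact eigenpair, the same case split on the dual regularity using Theorems \ref{Error_Theorem_Eigenpair} and \ref{Error_Theorem_Dualnorm} together with Lemma \ref{norm-equi}, and the same final choice $v_h=\hat u_h-u_h^2$. No substantive differences to report.
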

\begin{proof}
For all $v_h\in V_h^2$, from formula (\ref{tg-est2}), Theorems
\ref{Error_Theorem_Eigenpair} and \ref{Error_Theorem_Dualnorm}, we can obtain
\begin{eqnarray*}
&&a_s(\hat u_h^2-u_h^2,v_h)=a_s(\hat u_h^2,v_h)-a_s(u_h^2,v_h)\\
&=&\lambda_h^1b_w(u_h^1 ,v_h)-\lambda_h^2b_w(u_h ^2,v_h)\\
&=&\lambda_h^1b_w(u_h^1 ,v_h)-\lambda_h^1b_w(u_h^2,v_h)
+\lambda_h^1b_w(u_h ^2,v_h) -\lambda_h^2b_w(u_h ^2,v_h)\\
&=&\lambda_h^1b_w(u_h^1 -u_h^2,v_h)+(\lambda_h^1-\lambda_h^2)b_w(u_h ^2,v_h)\\
&=&\lambda_h^1b_w(u_h^1 -u,v_h)+\lambda_h^1b_w(u-u_h ^2,v_h)\\
&&+(\lambda_h^1-\lambda)b_w(u_h^2,v_h)+(\lambda-\lambda_h^2)b_w(u_h^2,v_h).
%&\lesssim&(h^{{\color{black}k_1+\gamma}-\eps}+h^{{\color{black}k_2+\gamma}-\eps})\|v_h\|_V
%+(h^{2k_1-2\eps}+h^{2k_2-2\eps})\|v_h\|\\
%&\lesssim& (h^{2k_1-2\eps}+h^{{\color{black}k_1+\gamma}-\frac32\eps})\trb{v_h}.
\end{eqnarray*}
If $k_1=1$ or the solution of the dual problem (\ref{Dual_Problem}) has the regularity
$\psi\in H^2(\Omega)$, we have
\begin{eqnarray}\label{Inequality_1_twospace}
a_s(\hat u_h^2-u_h^2,v_h)
&\lesssim& (\|u-u_h^1\|+\|u-u_h^2\|)\|v_h\|_b
+(|\lambda_h^1-\lambda|+|\lambda_h^2-\lambda|)\|v_h\|_b\nonumber\\
&\lesssim&(h^{{\color{black}k_1+1}-\eps}+h^{{\color{black}k_2+1}-\eps})\|v_h\|_b
+(h^{2k_1-2\eps}+h^{2k_2-2\eps})\|v_h\|_b\nonumber\\
&\lesssim& (h^{2k_1-2\eps}+h^{{\color{black}k_1+1}-\eps})\trb{v_h}.
\end{eqnarray}
If $k_1>1$ and the solution of the dual problem (\ref{Dual_Problem}) has the regularity
$\psi\in H^3(\Omega)$, the following estimates hold
\begin{eqnarray}\label{Inequality_2_twospace}
a_s(\hat u_h^2-u_h^2,v_h)
&\lesssim& (\|u-u_h^1\|_{-V}+\|u-u_h^2\|_{-V})\|v_h\|_V
+(|\lambda_h^1-\lambda|+|\lambda_h^2-\lambda|)\|v_h\|_b\nonumber\\
&\lesssim&(h^{{\color{black}k_1+2}-\eps}+h^{{\color{black}k_2+2}-\eps})\|v_h\|_V
+(h^{2k_1-2\eps}+h^{2k_2-2\eps})\|v_h\|\nonumber\\
&\lesssim& (h^{2k_1-2\eps}+h^{{\color{black}k_1+2}-\frac32\eps})\trb{v_h}.
\end{eqnarray}
From (\ref{Inequality_1_twospace})-(\ref{Inequality_2_twospace}) and taking $v_h=\hat u_h-u^2_h$,
we can obtain the desired result (\ref{Difference_twospace_direct_eigenfunction})
and the proof is completed.
\end{proof}

With Lemmas \ref{lemma4} and \ref{lemma3}, we can get the following
estimate easily.
\begin{theorem}\label{Two_Space_Thm}
Suppose $(\hat\lambda_h,\hat u_h)$ is calculated by Algorithm
\ref{twospace-alg}, $(\lambda, u)$ satisfies (\ref{problem-eq}) and
$k_1\le k_2$. When the exact solution $u\in H^{k_2+1}(\Omega)$,
the following estimate holds true
\begin{eqnarray*}
|\hat\lambda_h-\lambda|\lesssim h^{\hat k}+h^{2k_2-2\eps},
\end{eqnarray*}
where $\hat k=\min\{4k_1-4\eps,{\color{black}2k_1+2\gamma-2\eps}\}$.
\end{theorem}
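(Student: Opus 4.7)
The plan is to combine the two preceding lemmas with the triangle inequality and the standard convergence estimate of Theorem \ref{Error_Theorem_Eigenpair}, in complete parallel with the way Theorem \ref{Error_Eigen_Two_Grid_Theorem} was deduced in the two-grid case.

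First I would split the error $|\hat\lambda_h - \lambda|$ via the triangle inequality through the intermediate quantity $\lambda_h^2$, namely
\[
|\hat\lambda_h-\lambda| \;\le\; |\hat\lambda_h-\lambda_h^2| + |\lambda_h^2-\lambda|.
\]
The second term is controlled directly by Theorem \ref{Error_Theorem_Eigenpair} applied on the space $V_h^2$ with polynomial degree $k_2$, giving $|\lambda_h^2-\lambda| \lesssim h^{2k_2-2\eps}\|u\|_{k_2+1}^2$, which is exactly the second contribution in the desired bound.

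For the first term I would chain Lemma \ref{lemma4} and Lemma \ref{lemma3}: using $|\hat\lambda_h-\lambda_h^2|\lesssim \trb{\hat u_h - u_h^2}^2$ and squaring the estimate $\trb{\hat u_h - u_h^2}\lesssim h^{2k_1-2\eps} + h^{k_1+\gamma-\eps}$, I obtain, via the elementary inequality $(a+b)^2 \le 2(a^2+b^2)$,
\[
|\hat\lambda_h-\lambda_h^2| \;\lesssim\; h^{4k_1-4\eps} + h^{2k_1+2\gamma-2\eps} \;\lesssim\; h^{\hat k},
\]
with $\hat k = \min\{4k_1-4\eps,\, 2k_1+2\gamma-2\eps\}$ since we keep the smaller (dominant, for small $h$) exponent.

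Putting the two pieces together yields $|\hat\lambda_h-\lambda|\lesssim h^{\hat k}+h^{2k_2-2\eps}$, which is exactly the claim. There is essentially no obstacle here beyond checking that the hypothesis $k_1\le k_2$ is used only to guarantee $V_h^1 \subset V_h^2$ is not actually needed in the bookkeeping, and that $u\in H^{k_2+1}(\Omega)$ supplies the regularity required by Theorem \ref{Error_Theorem_Eigenpair} and Theorem \ref{Error_Theorem_Dualnorm} invoked inside Lemma \ref{lemma3}. The only mildly delicate point is the case split in the definition of $\gamma$, but this is already absorbed into Lemma \ref{lemma3}, so I would simply cite it rather than redo the distinction between $k_1=1$ and $k_1>1$.
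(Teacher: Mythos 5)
Your proposal is correct and follows essentially the same route as the paper, which deduces the theorem directly by combining Lemma \ref{lemma4} and Lemma \ref{lemma3} for $|\hat\lambda_h-\lambda_h^2|$ and the standard estimate of Theorem \ref{Error_Theorem_Eigenpair} for $|\lambda_h^2-\lambda|$ via the triangle inequality. The squaring step and the identification of $\hat k$ as the minimum of the two exponents are exactly the intended bookkeeping.
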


From Theorem \ref{Error_Theorem_Eigenpair} and Lemma \ref{lemma3},
we can get the following lower bound estimate.
\begin{theorem}\label{Lower_Bound_Theorem_TwoSpace}
Suppose the assumptions of Theorem \ref{Two_Space_Thm} hold true and let $\hat
k=\min\{4k_1-4\eps,2k_1+2\gamma-2\eps\}$ and $\delta$ be a positive number.
If $\hat k> 2k_2$, we have
\begin{eqnarray*}
\lambda-\hat \lambda_h\ge 0
\end{eqnarray*}
when $h$ is sufficiently small.
\end{theorem}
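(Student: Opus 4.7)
The plan is to mirror the structure of the proof of Theorem \ref{Lower_Bound_Theorem_TwoGrid} in the two-space setting. The key idea is to decompose
\begin{eqnarray*}
\lambda-\hat\lambda_h=(\lambda-\lambda_h^2)+(\lambda_h^2-\hat\lambda_h),
\end{eqnarray*}
where $(\lambda_h^2,u_h^2)$ is the direct WG eigenpair in the richer space $V_h^2$ satisfying (\ref{tg-est2}). The strategy is to show the first difference is positive and of order exactly $h^{2k_2}$, while the second is higher order under the condition $\hat k>2k_2$.

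First, I would invoke Theorem \ref{Error_Theorem_Eigenpair} applied in the space $V_h^2$. This gives the sharp asymptotic lower bound
\begin{eqnarray*}
\lambda-\lambda_h^2\ge Ch^{2k_2},
\end{eqnarray*}
which is the main source of positivity, provided $h$ is sufficiently small and the exact eigenfunction is smooth enough (which is guaranteed by the hypothesis $u\in H^{k_2+1}(\Omega)$ inherited from Theorem \ref{Two_Space_Thm}).

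Second, I would combine Lemma \ref{lemma4} with Lemma \ref{lemma3} to control the deviation of the Rayleigh quotient $\hat\lambda_h$ from $\lambda_h^2$. Squaring the estimate in Lemma \ref{lemma3} and feeding it into the quadratic bound of Lemma \ref{lemma4} yields
\begin{eqnarray*}
|\hat\lambda_h-\lambda_h^2|\lesssim\trb{\hat u_h-u_h^2}^2\lesssim h^{4k_1-4\eps}+h^{2k_1+2\gamma-2\eps}=h^{\hat k}.
\end{eqnarray*}
Since by hypothesis $\hat k>2k_2$, one can pick $\delta>0$ with $\hat k\ge 2k_2+\delta$, so that $|\hat\lambda_h-\lambda_h^2|\le Ch^{2k_2+\delta}$.

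Finally, assembling the two bounds,
\begin{eqnarray*}
\lambda-\hat\lambda_h=(\lambda-\lambda_h^2)+(\lambda_h^2-\hat\lambda_h)\ge Ch^{2k_2}-Ch^{2k_2+\delta}\ge 0
\end{eqnarray*}
for $h$ small enough. The only subtle point, and the main thing to be careful about, is verifying that the exponent produced by Lemmas \ref{lemma4} and \ref{lemma3} is genuinely $\hat k$ rather than something smaller after the squaring step, and that the constants hidden in the inequalities do not depend on $h$; this is the same bookkeeping issue as in Theorem \ref{Lower_Bound_Theorem_TwoGrid} and is handled by the same reasoning. No new techniques beyond those already developed are required.
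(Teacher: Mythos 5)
Your proposal is correct and follows essentially the same route as the paper: the lower bound $\lambda-\lambda_h^2\ge Ch^{2k_2}$ from Theorem \ref{Error_Theorem_Eigenpair}, the bound $|\hat\lambda_h-\lambda_h^2|\le Ch^{\hat k}\le Ch^{2k_2+\delta}$ obtained by feeding Lemma \ref{lemma3} into Lemma \ref{lemma4}, and the final triangle-inequality assembly. If anything, you are slightly more careful than the paper, which cites only Lemma \ref{lemma3} and leaves the squaring step via Lemma \ref{lemma4} implicit.
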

\begin{proof}
From Theorem \ref{Error_Theorem_Eigenpair}, we have
\begin{eqnarray*}
\lambda-\lambda_h\ge Ch^{2k_2}.
\end{eqnarray*}
According to Lemma \ref{lemma3}, the following inequalities hold
\begin{eqnarray*}
|\hat\lambda_h-\lambda_h|\le Ch^{\hat k}\le  Ch^{2k_2+\delta}.
\end{eqnarray*}
When $h$ is sufficiently small, it follows that
\begin{eqnarray*}
\lambda-\hat\lambda_h&=&\lambda-\lambda_h+\lambda_h-\hat\lambda_h
\ge\lambda-\lambda_h-|\hat\lambda_h-\lambda_h|\\
&\ge&Ch^{2k_2}-Ch^{2k_2+\delta}\ge 0,
\end{eqnarray*}
which completes the proof.
\end{proof}
\section{Numerical Experiments}
In this section, we present two numerical examples of Algorithms
\ref{twogrid-alg} and \ref{twospace-alg} to check the efficiencies
of Algorithms \ref{twogrid-alg} and \ref{twospace-alg} for  the eigenvalue problem
(\ref{problem-eq}).
%weak Galerkin method analyzed in the previous sections.

\subsection{Two-gird method}\label{unitsquare}
 In the first example, we consider the problem
(\ref{problem-eq}) on the unit square $\Omega=(0,1)^2$. It is known that the eigenvalue problem
has the  following eigenpairs %analytic solution
\begin{eqnarray*}
&&\lambda= (m^2+n^2)\pi^2, \ \ \ \ \ u=\sin(m\pi x)\sin(n\pi y),
\end{eqnarray*}
where $m$, $n$ are arbitrary positive integers. The first four different eigenvalues
are $\lambda_1=2\pi^2$, $\lambda_2=5\pi^2$, $\lambda_3=8\pi^2$ and
$\lambda_4=10\pi^2$, where algebraic or geometric multiplicities for
$\lambda_1$ and $\lambda_3$ are $1$ and for
$\lambda_2$ and $\lambda_4$ are both $2$. %algebraic and geometric multiplicities.

The uniform mesh is applied in the following examples, $H$ and $h$
denote mesh sizes. Numerical results for different choices of the parameter $\eps$ and
the degree $k$ of polynomial are presented. The corresponding
numerical results are showed in Tables \ref{Tables_Exam1_1}-\ref{Tables_Exam1_7}.
In Tables \ref{Tables_Exam1_1}-\ref{Tables_Exam1_2}, the polynomial
degree $k=1$, and $\varepsilon$ is set to be $0$ and $0.1$, separately.
{\color{black}From Theorem \ref{Error_Eigen_Two_Grid_Theorem}, we know the convergence
order for eigenvalue approximation is $2-2\varepsilon$ which is
shown from the numerical results included in Tables \ref{Tables_Exam1_1}
for $\varepsilon=0$ and \ref{Tables_Exam1_2} for $\varepsilon=0.1$. }
In Tables \ref{Tables_Exam1_3}-\ref{Tables_Exam1_7}, the
polynomial degree $k=2$ and $\varepsilon=0.1$. The mesh size $h$ is
selected to be $H^2$ in Tables \ref{Tables_Exam1_3}-\ref{Tables_Exam1_4}, and $H^{\frac32}$ in
Tables \ref{Tables_Exam1_6}-\ref{Tables_Exam1_7}. The convergence orders
for the eigenvalues, the trip-bar norm of eigenfunctions are presented.
The convergence orders in Tables \ref{Tables_Exam1_3}-\ref{Tables_Exam1_4}
coincide with that pblackicted in Theorem \ref{Error_Eigen_Two_Grid_Theorem}.
Since the choices of $k=2$, $\varepsilon=0.1$ and $h=H^2$ or $h=H^{\frac{3}{2}}$
do not satisfy the condition of Theorem \ref{Lower_Bound_Theorem_TwoGrid},
it is not surprising that the eigenvalue
approximations $\tilde\lambda_{j,h}$ ($j=1,...,6$) are not the lower bounds of the corresponding
exact eigenvalues (see Tables \ref{Tables_Exam1_3} and \ref{Tables_Exam1_6}).

%{\color{black}Add some explanation related to convergent rate of the relevant theorems!!}
{\color{black}Furthermore, the choice of $\eps$ can really affect the convergence
order which means the error estimates  in (\ref{Difference_twogrid_direct_eigenfunction}),
(\ref{Difference_twogrid_solving_eigenvalue}), (\ref{Error_Eigenfunction_Two_Grid}),
and (\ref{Error_Eigen_Two_Grid}) are reasonable.}

%The numerical results included in
%Tables \ref{Tables_Exam1_2}-\ref{Tables_Exam1_3},
%\ref{Tables_Exam1_6} shows the eigenvalue approximation $\tilde \lambda_{j,h}$
% is  also a lower bound of the exact eigenvalue $\lambda_j$.

%nonuniform mesh
%k=1 eps=0
\begin{table}[]
\centering
\caption{The eigenvalue errors $\lambda-\tilde{\lambda}_h$ for Example
1 with $k=1, \eps=0$.}\label{Tables_Exam1_1}
\begin{tabular}{|c|c|c|c|c|c|c|c|c|c|c|c|c|c|}
\hline
$H$     &    1/4   &    1/8   &    1/16  \\ \hline
$h$     &    1/16  &    1/64  &    1/256 \\ \hline
$\lambda_1-\tilde \lambda_{1,h}$ & 2.1554e-1 & 1.3006e-2 & 8.0627e-4 \\ \hline
order &       & 4.0507  & 4.0118  \\ \hline
$\lambda_2-\tilde \lambda_{2,h}$ & 1.3687e+0 & 8.2219e-2 & 4.8684e-3 \\ \hline
order &       & 4.0572  & 4.0780  \\ \hline
$\lambda_3-\tilde \lambda_{3,h}$ & 1.3687e+0 & 7.8229e-2 & 4.8240e-3 \\ \hline
order &       & 4.1290  & 4.0194  \\ \hline
$\lambda_4-\tilde \lambda_{4,h}$ & 3.1148e+0 & 2.0798e-1 & 1.2318e-2 \\ \hline
order &       & 3.9046  & 4.0776  \\ \hline
$\lambda_5-\tilde \lambda_{5,h}$ & 4.3750e+0 & 3.0337e-1 & 1.8860e-2 \\  \hline
order &       & 3.8501  & 4.0077  \\ \hline
$\lambda_6-\tilde \lambda_{6,h}$ & 4.0896e+0 & 3.0980e-1 & 1.8206e-2 \\ \hline
order &       & 3.7225  & 4.0889  \\ \hline
\end{tabular}
\end{table}

%k=1 eps=0.1
\begin{table}[]
\centering
\caption{The errors for the eigenvalue approximation $\tilde{\lambda}_h$
for Example 1 with $k=1, \eps=0.1$.}\label{Tables_Exam1_2}
\begin{tabular}{|c|c|c|c|c|c|c|c|c|c|c|c|c|c|}
\hline
$H$     &    1/4   &    1/8   &    1/16  \\ \hline
$h$     &    1/16  &    1/64  &    1/256 \\ \hline
$\lambda_1-\tilde \lambda_{1,h}$ & 2.7369e-1 & 1.8802e-2 & 1.3194e-3 \\ \hline
order &       & 3.8636  & 3.8330  \\ \hline
$\lambda_2-\tilde \lambda_{2,h}$ & 1.7347e+0 & 1.1960e-1 & 8.0009e-3 \\ \hline
order &       & 3.8584  & 3.9019  \\ \hline
$\lambda_3-\tilde \lambda_{3,h}$ & 1.7347e+0 & 1.1339e-1 & 7.9261e-3 \\ \hline
order &       & 3.9353  & 3.8386  \\ \hline
$\lambda_4-\tilde \lambda_{4,h}$ & 3.9686e+0 & 3.0206e-1 & 2.0167e-2 \\ \hline
order &       & 3.7157  & 3.9048  \\ \hline
$\lambda_5-\tilde \lambda_{5,h}$ & 5.7315e+0 & 4.4163e-1 & 3.1474e-2 \\ \hline
order &       & 3.6980  & 3.8106  \\ \hline
$\lambda_6-\tilde \lambda_{6,h}$ & 5.1859e+0 & 4.5160e-1 & 2.9977e-2 \\ \hline
order &       & 3.5215  & 3.9131  \\ \hline
\end{tabular}
\end{table}

\begin{table}[]
\centering
\caption{The eigenvalue errors $\lambda_h-\tilde{\lambda}_h$
for Example 1 with $k=2, \eps=0.1$.}\label{Tables_Exam1_3}
\begin{tabular}{|c|c|c|c|c|c|c|c|c|c|c|c|c|c|}
\hline
$H$     &    1/4   &    1/8   &    1/16  \\ \hline
$h$     &    1/16  &    1/64  &    1/256 \\ \hline
$\lambda_1-\tilde \lambda_{1,h}$ & 3.2380e-4 & 1.8403e-6 & 8.5157e-9 \\ \hline
order &       & 7.4590  & 7.7556  \\ \hline
$\lambda_2-\tilde \lambda_{2,h}$ & -6.9598e-2 & -2.2124e-4 & -8.7753e-7 \\ \hline
order &       & 8.2973  & 7.9779  \\ \hline
$\lambda_3-\tilde \lambda_{3,h}$ & -6.5952e-2 & -1.8022e-4 & -7.0453e-7 \\ \hline
order &       & 8.5155  & 7.9989  \\ \hline
$\lambda_4-\tilde \lambda_{4,h}$ & -1.5619e+0 & -5.5200e-3 & -2.1488e-5 \\ \hline
order &       & 8.1445  & 8.0050  \\ \hline
$\lambda_5-\tilde \lambda_{5,h}$ & -2.8009e+0 & -1.0533e-2 & -3.7459e-5 \\ \hline
order &       & 8.0549  & 8.1354  \\ \hline
$\lambda_6-\tilde \lambda_{6,h}$ & -5.3281e-1 & -1.8470e-3 & -5.4252e-6 \\ \hline
order &       & 8.1723  & 8.4113  \\ \hline
\end{tabular}
\end{table}

\begin{table}[]
\centering
\caption{The eigenfunction errors for Example 1 with $k=2, \eps=0.1$.}\label{Tables_Exam1_4}
\begin{tabular}{|c|c|c|c|c|c|c|c|c|c|c|c|c|c|}
\hline
$H$     &    1/4   &    1/8   &    1/16  \\ \hline
$h $    &    1/16  &    1/64  &    1/256 \\ \hline
$\trb{Q_h u_1-\tilde u_{1,h}}$ & 4.2020e-2 & 2.5571e-3 & 1.6773e-4 \\ \hline
order &       & 4.0385  & 3.9303  \\ \hline
$\trb{Q_h u_2-\tilde u_{2,h}}$ & 3.4429e-1 & 1.9352e-2 & 1.2275e-3 \\ \hline
order &       & 4.1531  & 3.9786  \\ \hline
$\trb{Q_h u_3-\tilde u_{3,h}}$ & 3.4382e-1 & 1.9351e-2 & 1.2275e-3 \\ \hline
order &       & 4.1512  & 3.9786  \\ \hline
$\trb{Q_h u_4-\tilde u_{4,h}}$ & 2.1940e+0 & 1.3372e-1 & 8.3730e-3 \\ \hline
order &       & 4.0363  & 3.9973  \\ \hline
$\trb{Q_h u_5-\tilde u_{5,h}}$ & 2.7400e+0 & 1.6506e-1 & 9.9455e-3 \\ \hline
order &       & 4.0531  & 4.0528  \\ \hline
$\trb{Q_h u_6-\tilde u_{6,h}}$ & 2.7293e+0 & 1.6504e-1 & 9.9455e-3 \\ \hline
order &       & 4.0476  & 4.0526  \\ \hline
\end{tabular}
\end{table}

\begin{table}[]
\centering
\caption{The eigenvalue errors $\lambda_h-\tilde{\lambda}_h$
for Example 1 with $k=2, \eps=0.1$.}\label{Tables_Exam1_6}
\begin{tabular}{|c|c|c|c|c|c|c|c|c|c|c|c|c|c|}
\hline
$H $    &    1/4   &    1/16  &    1/64  \\ \hline
$h $    &    1/8   &    1/64  &    1/512 \\ \hline
$\lambda_1-\tilde \lambda_{1,h}$ & 1.3127e-2 & 4.1262e-6 & 2.5784e-9 \\ \hline
order &       & 5.8177  & 5.3221  \\ \hline
$\lambda_2-\tilde \lambda_{2,h}$ & 1.3204e-1 & 7.3767e-5 & 2.3883e-8 \\ \hline
order &       & 5.4029  & 5.7964  \\ \hline
$\lambda_3-\tilde \lambda_{3,h}$ & 8.1757e-2 & 5.4651e-5 & 1.8027e-8 \\ \hline
order &       & 5.2734  & 5.7829  \\ \hline
$\lambda_4-\tilde \lambda_{4,h}$ & -1.0040e+0 & 2.4182e-4 & 8.0721e-8 \\ \hline
order &       & 6.0098  & 5.7744  \\ \hline
$\lambda_5-\tilde \lambda_{5,h}$ & -1.7621e+0 & 4.8559e-4 & 1.5872e-7 \\ \hline
order &       & 5.9126  & 5.7895  \\ \hline
$\lambda_6-\tilde \lambda_{6,h}$ & 1.0614e-1 & 5.1734e-4 & 1.5931e-7 \\ \hline
order &       & 3.8403  & 5.8325  \\ \hline
\end{tabular}
\end{table}

\begin{table}[]
\centering
\caption{The eigenfunction errors
for Example 1 with $k=2, \eps=0.1$.}\label{Tables_Exam1_7}
\begin{tabular}{|c|c|c|c|c|c|c|c|c|c|c|c|c|c|}
\hline
$H$     &    1/4   &    1/16  &    1/64  \\ \hline
$h $    &    1/8   &    1/64  &    1/512 \\ \hline
$\trb{Q_h u_1-\tilde u_{1,h}}$ & 1.2856e-1 & 1.9489e-3 & 3.3757e-5 \\ \hline
order &       & 3.0218  & 2.9257  \\ \hline
$\trb{Q_h u_2-\tilde u_{2,h}}$ & 6.8967e-1 & 7.9977e-3 & 1.3471e-4 \\ \hline
order &       & 3.2151  & 2.9458  \\ \hline
$\trb{Q_h u_3-\tilde u_{3,h}}$ & 6.8499e-1 & 7.9974e-3 & 1.3471e-4 \\ \hline
order &       & 3.2102  & 2.9458  \\ \hline
$\trb{Q_h u_4-\tilde u_{4,h}}$ & 2.4166e+0 & 1.8197e-2 & 2.7328e-4 \\ \hline
order &       & 3.5265  & 3.0286  \\ \hline
$\trb{Q_h u_5-\tilde u_{5,h}}$ & 3.0868e+0 & 2.5302e-2 & 3.8628e-4 \\ \hline
order &       & 3.4653  & 3.0167  \\ \hline
$\trb{Q_h u_6-\tilde u_{6,h}}$ & 3.0298e+0 & 2.5299e-2 & 3.8628e-4 \\ \hline
order &       & 3.4520  & 3.0167  \\ \hline
\end{tabular}
\end{table}

%---------------------------------------------------------------------------------------------------
\subsection{Two-space method}

In the second example, the analytic solution is the same as
(\ref{unitsquare}). The polynomials of degree $k_1=1,~k_2=2$ and $k_1=2,~k_2=3$
are employed in $V_h^1$ and $V_h^2$, respectively. The parameter $\eps$
is chosen to be $0.2$. The results are listed in Figures
\ref{fig:1}-\ref{fig:2} for the case $k_1=1$, $k_2=2$ and $\eps=0.2$
and Tables \ref{fig:3}-\ref{fig:4} for $k_1=2$, $k_2=3$ and $\eps=0.2$.

\begin{figure}[ht]
\centering\includegraphics[width=8cm]{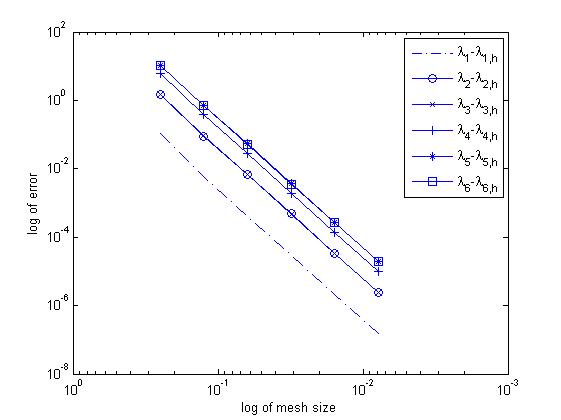}
\caption{The eigenvalue errors $\lambda_h-\hat{\lambda}_h$ for
Example 2 with $k_1=1$, $k_2=2$ and $\eps=0.2$.}\label{fig:1}
\end{figure}

\begin{figure}[ht]
\centering\includegraphics[width=8cm]{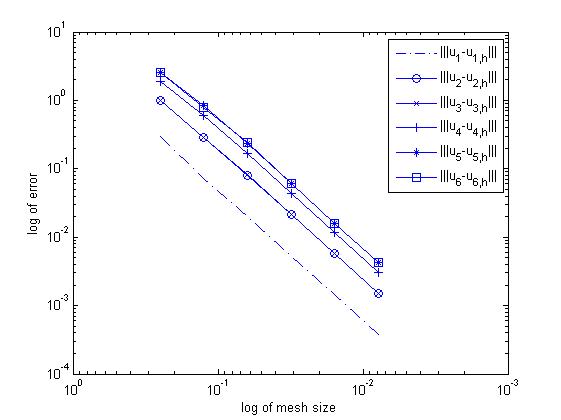}
\caption{The errors for the eigenfunction approximations $\trb{u_h-\hat u_h}$
for Example 2 with $k_1=1$, $k_2=2$ and $\eps=0.2$.}\label{fig:2}
\end{figure}

\begin{figure}[ht]
\centering\includegraphics[width=8cm]{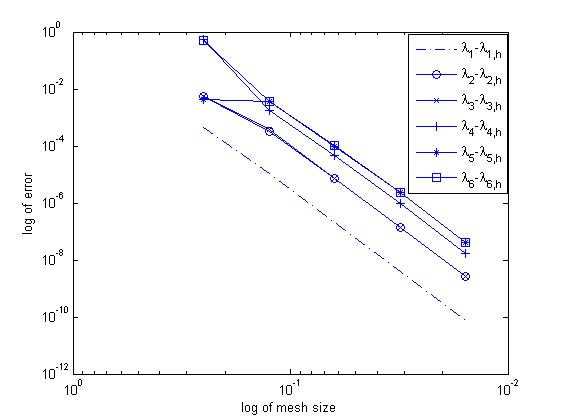}
\caption{The eigenvalue errors $\lambda_h-\hat{\lambda}_h$ for
Example 2 with $k_1=2$, $k_2=3$ and $\eps=0.2$.}\label{fig:3}
\end{figure}

\begin{figure}[ht]
\centering\includegraphics[width=8cm]{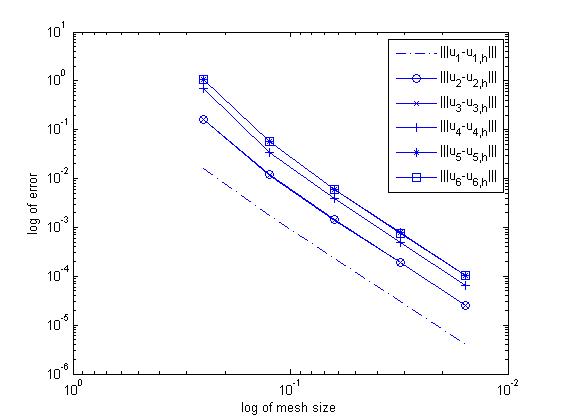}
\caption{The errors for the eigenfunction approximations $\trb{u_h-\hat u_h}$
for Example 2 with $k_1=2$, $k_2=3$ and $\eps=0.2$.}\label{fig:4}
\end{figure}

{\color{black} The convergence orders shown in Figures \ref{fig:1}-\ref{fig:4}
are consistent with the results in Theorems \ref{Two_Space_Thm} and \ref{Lower_Bound_Theorem_TwoSpace}.
%Since the choices of $k_1=1$, $k_2=2$ and $\varepsilon=0.2$ satisfy the condition of
%Theorem \ref{Lower_Bound_Theorem_TwoSpace}, the eigenvalue approximations by the
%two-space method are the lower bounds of the exact eigenvalues.
Even the choices of $k_1=1$, $k_2=2$ and $\varepsilon=0.2$ do not satisfy
the condition of Theorem \ref{Lower_Bound_Theorem_TwoSpace}, the eigenvalue approximations
by the two-space method are still the lower bounds of the exact eigenvalues.}

%{\color{red}Can we provide the results for $k_1=2$ and $k_2=3$? They satisfy the condition of
%Theorem \ref{Lower_Bound_Theorem_TwoSpace}.}

%{\color{black}Some explanations are needed here!!}

\subsection{L-shape}
In the third example, we consider the problem
(\ref{problem-eq}) on the L-shape domain $\Omega=(-1,1)^2\slash [0,1)^2$. Since the exact eigenvalues
are unknown. We only check the eigenvalues $\tilde\lambda_{j,h}$ {\color{black}($j=1,...,6$)}.
The corresponding numerical results are shown in Table
\ref{Tables_Exam3_1}. From Table \ref{Tables_Exam3_1}, we find
that the two-grid method defined in Algorithm \ref{twogrid-alg}
is accurate and provides lower bounds.

\begin{table}[]
\centering
\caption{The errors for the eigenvalue approximations $\lambda_h-\tilde\lambda_h$
for Example 3 with $k=2, \eps=0.1$.}\label{Tables_Exam3_1}
\begin{tabular}{|c|c|c|c|c|c|c|c|c|c|c|c|c|c|}
\hline
$H$     &    1/4   &    1/16  &    1/64  & Trend\\ \hline
$h$     &    1/8   &    1/64  &    1/512 & \\ \hline
$\tilde \lambda_{1,h}$ & 9.6152615304  & 9.6383056544  & 9.6396344695  & $\nearrow$\\ \hline
$\tilde \lambda_{2,h}$ & 15.1905227597  & 15.1972465939  & 15.1972519114  & $\nearrow$\\ \hline
$\tilde \lambda_{3,h}$ & 19.7262367431  & 19.7392046788  & 19.7392088004  & $\nearrow$\\ \hline
$\tilde \lambda_{4,h}$ & 29.4848098848  & 29.5214666179  & 29.5214811041  & $\nearrow$\\ \hline
$\tilde \lambda_{5,h}$ & 31.7992824737  & 31.9091062924  & 31.9124163062  & $\nearrow$\\ \hline
$\tilde \lambda_{6,h}$ & 41.3482606285  & 41.4717757164  & 41.4743429661  & $\nearrow$\\ \hline
\end{tabular}
\end{table}

%-----------------------------------------------------------------------------------------------
\section{Concluding remarks and ongoing work}
In this paper, we propose and analyze the two-grid and two-space schemes for the
eigenvalue problem by the WG method. Based on our analysis, the
eigenpair approximations by the two-grid and two-space methods possess the same
reasonable accuracy as the direct WG approximations, but the calculation
cost is significantly reduced. From the
numerical examples, we also find that the eigenvalue
approximations by the two-grid method have the same lower bound
property as the direct WG approximations, if we choose the grid or
space properly.

In the future work, we are going to study the shift-inverse power
method and multigrid method for the Laplacian eigenvalue problem,
and other kinds of eigenvalue problems, such as biharmonic eigenvalue problems and Stokes
eigenvalue problems.
%-----------------------------------------------------------------------------------------------

\bibliographystyle{siam}
\bibliography{library}

\end{document}